\def\ga{\gamma}
\def\Ga{{\Gamma}}
\def\de{\delta}
\def\De{\Delta}
\def\eps{{\varepsilon}}
\def\om{\omega}
\def\Om{\Omega}
\def\sig{{\sigma}}
\def\th{{\theta}}
\newcommand{\ph}{\varphi}
\def\ze{{\zeta}}
\newcommand{\demi}{\frac{1}{2}}
\newcommand{\ov}{\overline}
\newcommand{\dist}{\operatorname{dist}}
\newcommand{\cont}{\operatorname{cont}}
\newcommand{\ID}{\mathop{\hbox{{\rm Id}}}\nolimits}
\newcommand{\I}{{\mathrm i}}
\newcommand{\dd}{{\mathrm d}}
\newcommand{\ee}{\mathrm e}
\newcommand{\ii}{^{-1}}
\newcommand{\ti}{\tilde}
\newcommand{\eopf}{
\hfill\begin{picture}(.24,.24)\thinlines
\put(0,0){\line(1,0){.24}}
\put(.24,0){\line(0,1){.24}}
\put(.24,.24){\line(-1,0){.24}}
\put(0,.24){\line(0,-1){.24}}
\end{picture}\bigskip }
\def\ie{{\it i.e.}\ }
\def\eg{{\it e.g.}\ }
\def\resp{{resp.}\ }
\def\wrt{{with respect to}}
\def\rhs{{right-hand side}}
\newcommand{\C}{\mathbb{C}}
\newcommand{\D}{\mathbb{D}}
\newcommand{\N}{\mathbb{N}}
\newcommand{\R}{\mathbb{R}}
\newcommand{\Z}{\mathbb{Z}}
\def\cF{\mathcal{F}}
\def\cL{\mathcal{L}}
\newtheorem{thm}{Theorem}[section]
\newtheorem{lemma}[thm]{Lemma}
\newtheorem{Def}[thm]{Definition}
\newtheorem{nota}[thm]{Notation}
\theoremstyle{definition}
\newtheorem{rem}[thm]{Remark}
\newtheorem{exa}[thm]{Example}
\newcounter{parag}[section]
\newcounter{parage}
\newcounter{paraga}
\def\om{\omega}
\def\Bbibitem#1#2{\bibitem[#1]{#2}}
\newcommand{\defeq}{\coloneqq} 
\newcommand{\col}{\colon\thinspace}          
\newcommand{\gR}{\mathscr R}       
\newcommand{\Ddem}{\D_{\rho/2}}
\newcommand{\beglabel}[1]{\begin{equation}	\label{#1}}
\newcommand{\elabel}{\end{equation}}
\begin{document}




\title{On the stability under convolution of \\ resurgent functions}

\author{David Sauzin}


\maketitle

\vspace{-.75cm}

\begin{abstract}
This article introduces, for any closed discrete subset~$\Om$ of~$\C$, the
definition of $\Om$-continuability, 
a particular case of \'Ecalle's resurgence:
$\Om$-continuable functions are required to be holomorphic near~$0$ and to admit
analytic continuation along any path which avoids~$\Om$.
We give a rigorous and self-contained treatment of the stability under
convolution of this space of functions,
showing that a necessary and sufficient condition is the stability of~$\Om$
under addition.

\medskip

\noindent Keywords: Resurgent functions, convolution algebras. MSC: 30D05, 37F99.
\end{abstract}


\section{Introduction}




\'Ecalle's theory of resurgent functions is an efficient tool for dealing with
divergent series arising from complex dynamical systems or WKB expansions, and
for determining the analytic invariants of differential or difference equations.
Fundamental notions of the theory are that of 
\emph{germs analytically continuable without a cut},
and the related notion of
\emph{endlessly continuable germs}: these are holomorphic germs of one complex
variable at the origin which enjoy a certain property of analytic continuation
(the possible singularities of their analytic continuation must be isolated, at
least locally---\cite{Eca81}, \cite{Mal85}, \cite{CNP}); they arise as Borel
transforms of possibly divergent formal series which solve certain nonlinear
problems.

Since the theory is designed to deal with nonlinear problems, it is an
essential fact that the property of endless continuability 
(or of continuability without a cut)
is stable under convolution (indeed, via Borel transform, the convolution of
germs at~$0$ reflects the Cauchy product of formal series).
This allows to define the \emph{algebra} of resurgent functions in the
``convolutive model'' and then to study certain subalgebras obtained by
specifying the location or the nature of the possible singularities that one can
encounter in the process of analytic continuation.
\'Ecalle then proceeds with defining the ``alien calculus'', which involves
particular derivations of this algebra and is an efficient way of encoding the
singularities, and deriving consequences in the ``geometric models'' obtained by
applying the Laplace transform in all possible directions; 
this is a way of describing nonlinear Stokes phenomena or of solving
problems of analytic classification---see \cite{Eca81},
\cite{dulac}, \cite{Eca93}, \cite{CNP}, \cite{kokyu}, \cite{mouldSN}, \cite{Lima}.

Unfortunately, the proof of the stability under convolution of endlessly
continuable germs in full generality is difficult. \'Ecalle's argument is based
on the notion of ``symmetrically contractile'' paths, but the fact that one can
always find such paths is a delicate matter.
Therefore, when we came across a strikingly simple proof which applies to
interesting subspaces of resurgent functions, we thought it was worthwhile to
bring it to the attention of researchers interested in resurgence theory.

We shall deal in this article with a particular case of endless continuability,
which we call $\Om$-continuability, which corresponds to specifying a priori the
possible location of the singularities: they are required to lie in a set~$\Om$
that we fix in advance. This means that there is one Riemann surface over~$\C$,
depending only on~$\Om$, on which every $\Om$-continuable germ induces a
holomorphic function
(whereas in the general case of endless continuability there is an ``endless''
Riemann surface which does depend on the considered germ).
This definition already covers interesting cases:
one encounters $\Om$-continuable germs with $\Om=\N^*$ or $\Om=\Z$
when dealing with differential equations formally conjugate to the Euler
equation (in the study of the saddle-node singularities) \cite{Eca84}, \cite{mouldSN},
or with $\Om=2\pi\I\Z$ when dealing with certain difference equations like Abel's
equation for parabolic germs in holomorphic dynamics \cite{Eca81}, \cite{kokyu},
\cite{EVinv}, \cite{Lima}.




Our aim is to give a rigorous and self-contained treatment of the stability
under convolution of the space of $\Om$-continuable germs,
with more details and more complete explanations than \eg
\cite{kokyu} which was dealing with the particular case $\Om=2\pi\I\Z$.
For the latter case, the recent article \cite{Y_Ou} is available, but our
approach is different.

For any closed discrete subset of~$\C$, we shall thus 
introduce the definition of $\Om$-continuability in Section~\ref{secOmcont},
recall the definition of convolution in Section~\ref{sec_contconvoleasy}
and state in Section~\ref{sec_contconvolgen} our main result,
Theorem~\ref{thmOmstbgROmstb}, which is \emph{the equivalence of the stability
under convolution of $\Om$-continuable germs and the stability under addition of
the set~$\Om$}.
The rest of the article will be devoted to the proof of this theorem.

A novel feature of our proof (even if we certainly owe a debt to \cite{Eca81}
and \cite{CNP}) is the construction of ``symmetric $\Om$-homotopies'' by means
of certain non-autonomous vector fields.


\section{The $\Om$-continuable germs}	\label{secOmcont}






In this article, ``path'' means a piecewise $C^1$ function $\ga \col
J\to\C$, where~$J$ is a compact interval of~$\R$.
For any $R>0$ and $\ze_0\in\C$ we use the notations 
$D(\ze_0,R) \defeq \{\, \ze\in\C \mid |\ze-\ze_0|< R \,\}$,
$\D_R \defeq D(0,R)$ and
$\D^*_R \defeq \D_R \setminus \{0\}$.


\begin{Def}	\label{DefOmCont}
Let $\Om$ be a non-empty closed discrete subset of~$\C$,
let $\hat\ph(\ze)\in\C\{\ze\}$ be a holomorphic germ at the origin.
We say that $\hat\ph$ is $\Om$-continuable if there exists $R>0$ not larger than
the radius of convergence of~$\hat\ph$ such that $\D^*_R\cap\Om=\emptyset$
and $\hat\ph$ admits analytic continuation along any path of $\C\setminus\Om$
originating from any point of~$\D^*_R$.
We use the notation
\[
\hat\gR_\Om \defeq \{\, \text{all $\Om$-continuable holomorphic germs} \,\}
\subset \C\{\ze\}.
\]
\end{Def}



\begin{rem}	\label{reminitialpoint}
Let $\rho \defeq \min\big\{ |\om|, \; \om\in\Om\setminus\{0\} \big\}$.
Any $\hat\ph\in\hat\gR_\Om$ is a holomorphic germ at~$0$ with radius of
convergence $\ge\rho$ and one can always take $R=\rho$ in
Definition~\ref{DefOmCont}. 
In fact, given an arbitrary $\ze_0 \in \D_\rho$, we have
\[
\hat\ph\in\hat\gR_\Om \quad\Longleftrightarrow\quad
\left| \begin{aligned}
&\text{$\hat\ph$ germ of holomorphic function of~$\D_\rho$ admitting analytic}\\
&\text{continuation along any path $\ga\col [0,1] \to \C$ such that} \\
&\text{$\ga(0) = \ze_0$ and $\ga\big( (0,1] \big) \subset \C\setminus\Om$}
\end{aligned} \right.
\]
(even if $\ze_0 = 0$ and $0\in\Om$: there is no need to avoid~$0$ at the beginning of the
path, when we still are in the disc of convergence of~$\hat\ph$).
\end{rem}
%




\begin{exa}
Trivially, any entire function of~$\C$ defines an $\Om$-continuable germ.
Other elementary examples of $\Om$-continuable germs are the functions which are
holomorphic in $\C\setminus\Om$ and regular at~$0$, like $\frac{1}{(\ze-\om)^m}$
with $m\in\N^*$ and $\om\in\Om\setminus\{0\}$.
But these are still single-valued examples, whereas the interest of the
Definition~\ref{DefOmCont} is to authorize multiple-valuedness when following the
analytic continuation.
Elementary examples of multiple-valued continuation are provided by
$\sum_{n\ge1} \frac{\ze^n}{n} = - \log(1-\ze)$ (principal branch of the
logarithm), which is $\Om$-continuable if and only if $1\in\Om$,
and $\sum_{n\ge0} \frac{\ze^n}{n+1} = - \frac{1}{\ze}\log(1-\ze)$, 
which is $\Om$-continuable if and only if $\{0,1\}\subset\Om$.
\end{exa}

\begin{exa}
If $\om\in\C^*$ and $m\in\N^*$, then
$\big(\log(\ze-\om)\big)^m \in \hat\gR_{\{\om\}}$;
if moreover $\om\neq-1$, then
$\big(\log(\ze-\om)\big)^{-m}\in \hat\gR_{\{\om,\om+1\}}$.
\end{exa}

\begin{exa}
If $\Om$ is a closed discrete subset of~$\C$, $0\notin\Om$, $\om\in\Om$ and
$\hat\psi$ is holomorphic in $\C\setminus\Om$, then $\hat\ph(\ze) =
\hat\psi(\ze)\log(\ze-\om)$ defines a germ of $\hat\gR_\Om$ whose monodromy
around~$\om$ is given by $2\pi\I\hat\psi$.
\end{exa}


\begin{nota}	\label{rempathscont}
Given a path $\ga \col [a,b]\to\C$, if $\hat\ph$ is a holomorphic germ
at~$\ga(a)$ which admits an analytic continuation along~$\ga$, we denote by
$\cont_\ga\hat\ph$
the resulting holomorphic germ at the endpoint~$\ga(b)$.

As is often the case with analytic continuation and Cauchy integrals, the
precise parametrisation of our paths will usually not matter, in the sense that
we shall get the same results from two paths
$\ga\col [a,b]\to \C$ and 
$\ga'\col [a',b']\to \C$
which only differ by a change of parametrisation ($\ga = \ga'\circ\sig$ with
$\sig\col [a,b]\to [a',b']$ piecewise continuously differentiable, increasing
and mapping $a$ to~$a'$ and $b$ to~$b'$).

We identify $\C\{\ze\}$, the space of power series with positive radius of
convergence, with the space of holomorphic germs at~$0$. 
Given $\hat\ph\in\C\{\ze\}$, we shall often denote by the same symbol~$\hat\ph$ 
the holomorphic function it defines, or even the principal branch of its
analytic continuation when such a notion is well-defined.
\end{nota}


\section{The convolution of holomorphic germs at the origin}	\label{sec_contconvoleasy}


The convolution in $\C\{\ze\}$ is defined by the formula
\[
\hat\ph*\hat\psi(\ze) \defeq \int_0^\ze \hat\ph(\xi)\hat\psi(\ze-\xi) \,\dd\xi
\]
for any $\hat\ph,\hat\psi\in\C\{\ze\}$: the formula makes sense for $|\ze|$
small enough and defines a holomorphic germ at~$0$ whose disc of convergence
contains the intersection of the discs of convergence of~$\hat\ph$ and~$\hat\psi$.
The convolution law~$*$ is commutative and associative.%
\footnote{Indeed, the formal Borel transform
$\ti\ph(z) = \sum a_n z^{-n-1} \mapsto \hat\ph(\ze) = \sum a_n \frac{\ze^n}{n!}$
turns the Cauchy product of $z\ii\C[[z\ii]]$ into convolution 
(and the Laplace transform
$(\cL\hat\ph)(z) \defeq \int_0^\infty \ee^{-z\ze} \hat\ph(\ze)\,\dd\ze$
turns the convolution into the ordinary product of analytic functions).}

The question we address in this article is the question of the stability
of~$\hat\gR_\Om$ under convolution.
As already mentioned, this is relevant when dealing with the formal solutions of
nonlinear problems and this is absolutely necessary to develop the theory of
resurgent functions and alien calculus for $\Om$-continuable germs.

This amounts to inquiring about the analytic continuation of the germ
$\hat\ph*\hat\psi$ when $\Om$-continuability is assumed for~$\hat\ph$
and~$\hat\psi$.
Let us first mention an easy case, which is used in \cite{EVinv} and \cite{mouldSN}:
\begin{lemma}	\label{lemeasyconvol}
Let $\Om$ be any non-empty closed discrete subset of~$\C$ and suppose $\hat A$ is an entire
function of~$\C$.
Then, for any $\hat\ph\in\hat\gR_\Om$, the convolution product $\hat A*\hat\ph$
belongs to~$\hat\gR_\Om$;
its analytic continuation along a path~$\ga$ of $\C\setminus\Om$ starting
from a point~$\ze_0$ close enough to~$0$ and ending at a point~$\ze_1$ is the holomorphic germ
at~$\ze_1$ explicitly given by
\beglabel{eqcontAph}
\cont_\ga(\hat A*\hat\ph)(\ze) = 
\int_0^{\ze_0} \hat A(\ze-\xi) \hat\ph(\xi) \,\dd\xi 
+ \int_\ga \hat A(\ze-\xi) \hat\ph(\xi) \,\dd\xi 
+ \int_{\ze_1}^\ze \hat A(\ze-\xi) \hat\ph(\xi) \,\dd\xi
\elabel
for $\ze$ close enough to~$\ze_1$.
\end{lemma}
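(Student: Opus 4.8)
The plan is to reduce everything to a single Cauchy-integral identity and then differentiate under the integral sign. First I would set up the geometry: choose $R>0$ with $\D_R^*\cap\Om=\emptyset$ inside the disc of convergence of~$\hat\ph$, pick $\ze_0\in\D_R^*$, and observe that for $\ze$ near~$\ze_0$ the germ $\hat A*\hat\ph$ is represented by $\int_0^\ze \hat A(\ze-\xi)\hat\ph(\xi)\,\dd\xi$, where the integration path from~$0$ to~$\ze$ can be taken inside~$\D_R$; since $\hat A$ is entire, the only constraint on the $\xi$-contour is that it must stay where $\hat\ph$ is defined. Splitting this contour as $[0,\ze_0]$ followed by a short segment $[\ze_0,\ze]$ gives the formula~\eqref{eqcontAph} in the degenerate case where~$\ga$ is trivial (the middle term absent). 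This is the base case of the continuation.

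Next I would carry out the analytic continuation along~$\ga\col[a,b]\to\C\setminus\Om$ with $\ga(a)=\ze_0$. The key point is that the right-hand side of~\eqref{eqcontAph}, with $\ze_1$ replaced by $\ga(t)$ and the middle integral taken over $\ga|_{[a,t]}$, namely
\[
F_t(\ze)\defeq \int_0^{\ze_0}\hat A(\ze-\xi)\hat\ph(\xi)\,\dd\xi+\int_{\ga|_{[a,t]}}\hat A(\ze-\xi)\hat\ph(\xi)\,\dd\xi+\int_{\ga(t)}^{\ze}\hat A(\ze-\xi)\hat\ph(\xi)\,\dd\xi,
\]
defines, for each~$t$, a holomorphic germ of~$\ze$ at~$\ga(t)$: the first integrand is entire in~$\ze$ because $\hat A$ is entire and $\hat\ph$ is integrated over a fixed compact arc in~$\D_R$; the second integrand likewise, $\hat\ph$ being evaluated only along $\ga|_{[a,t]}$ where $\cont_{\ga}\hat\ph$ is defined by hypothesis; and the third is the Cauchy integral of an entire function of $\ze-\xi$ times the local germ $\cont_{\ga|_{[a,t]}}\hat\ph$ near~$\ga(t)$, hence holomorphic in~$\ze$ near~$\ga(t)$. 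I would then check that $t\mapsto F_t$ is a continuous family of germs along~$\ga$ and that it agrees with $\hat A*\hat\ph$ near $t=a$: indeed $F_a$ is exactly the base-case expression above. By uniqueness of analytic continuation this forces $F_b=\cont_\ga(\hat A*\hat\ph)$, which is precisely~\eqref{eqcontAph}. Since~$\ga$ and~$\ze_0$ were arbitrary, this also shows $\hat A*\hat\ph\in\hat\gR_\Om$.

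The one technical point that needs care — and the place I expect the "main obstacle", though it is a mild one — is verifying that $F_t$ varies holomorphically/continuously through the whole range of~$t$, including the junctions where the decomposition of the contour shifts a bite of the $\ga$-integral into the $[\ga(t),\ze]$-integral. The clean way is to fix a finite subdivision $a=t_0<t_1<\dots<t_N=b$ with each $\ga|_{[t_j,t_{j+1}]}$ contained in a disc~$U_j$ on which $\cont_{\ga|_{[a,t_j]}}\hat\ph$ extends holomorphically (possible by compactness of $\ga([a,b])$ and the definition of~$\hat\gR_\Om$), and to treat one sub-arc at a time: on $U_j$ the identity
\[
\int_{\ga(t_j)}^{\ze}\!\!\hat A(\ze-\xi)\hat\ph(\xi)\,\dd\xi=\int_{\ga|_{[t_j,t]}}\!\!\hat A(\ze-\xi)\hat\ph(\xi)\,\dd\xi+\int_{\ga(t)}^{\ze}\!\!\hat A(\ze-\xi)\hat\ph(\xi)\,\dd\xi
\]
is just path-additivity of a Cauchy integral of a function holomorphic on~$U_j$, valid for $\ze$ in a slightly smaller disc, so $F_t$ is constant in~$t$ as a germ on that sub-range and matches at the endpoints. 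Concatenating over $j=0,\dots,N-1$ gives the result; differentiation under the integral sign (legitimate since all integrands are jointly continuous and holomorphic in~$\ze$ on compact contours) is what guarantees the resulting object is genuinely the analytic continuation rather than merely a coincidence of values. No hypothesis on the nature of~$\Om$ beyond closed-discrete is used, and in particular no stability of~$\Om$ under addition — that is the whole point of this being the "easy case."
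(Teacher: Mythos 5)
Your argument is correct, and it is essentially the proof the paper has in mind: the lemma is left as an exercise with a pointer to the proof of Lemma~\ref{lemcontsymOm}, and your family of germs $F_t$, the base-case identification near $t=a$ via path-independence in the disc of convergence, and the local constancy in~$t$ via Cauchy's theorem on discs covering the path are exactly that argument specialized to the case where one factor is entire. The only ingredient worth making explicit is the uniform lower bound on the radius of convergence of $\cont_{\ga|_{[a,t]}}\hat\ph$ along~$\ga$ (the analogue of Lemma~\ref{lemtechnic}), which your compactness remark correctly supplies.
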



The proof is left as an exercise
(see \eg the proof of Lemma~\ref{lemcontsymOm} for a formalized proof
in a more complicated situation), 
but we wish to emphasize that formulas such
as~\eqref{eqcontAph} require a word of caution:
the value of $\hat A(\ze-\xi)$ is unambiguously defined whatever~$\ze$ and~$\xi$
are, but in the notation ``$\hat\ph(\xi)$'' it is understood that we are using the
appropriate branch of the possibily multiple-valued function~$\hat\ph$;
in such a formula, what branch we are using is clear from the context: 
\begin{enumerate}[$-$]
\item $\hat\ph$ is unambiguously defined in its disc of convergence~$D_0$ (centred
at~$0$) and the first integral thus makes sense for $\ze_0\in D_0$;
\item in the second integral
$\xi$ is moving along~$\ga$ which is a path of analytic continuation for~$\hat\ph$,
we thus consider the analytic continuation of~$\hat\ph$ along the piece of~$\ga$
between its origin and~$\xi$;
\item in the third integral, ``$\hat\ph$'' is to be understood as $\cont_\ga\hat\ph$, the germ at~$\ze_1$
resulting form the analytic continuation of~$\hat\ph$ along~$\ga$, this
integral then makes sense for any $\ze$ at a distance from~$\ze_1$ less than the radius of
convergence of $\cont_\ga\hat\ph$.
\end{enumerate}

Using a parametrisation $\ga\col[0,1]\to \C\setminus\Om$, with $\ga(0)=\ze_0$
and $\ga(1)=\ze_1$, and introducing the truncated paths
$\ga_s \defeq \ga_{|[0,s]}$ for any $s\in[0,1]$, the interpretation of the last two
integrals in~\eqref{eqcontAph} is
\begin{align*}
\int_\ga \hat A(\ze-\xi) \hat\ph(\xi) \,\dd\xi &\defeq
\int_0^1 \hat A(\ze-\ga(s)) (\cont_{\ga_s} \hat\ph)(\ga(s)) \ga'(s)\,\dd s,\\[1ex]
\int_{\ze_1}^\ze \hat A(\ze-\xi) \hat\ph(\xi) \,\dd\xi &\defeq
\int_{\ze_1}^\ze \hat A(\ze-\xi) (\cont_\ga \hat\ph)(\xi) \,\dd\xi.
\end{align*}
%


\section{Main result}	\label{sec_contconvolgen}


We now wish to be able to consider the convolution of two $\Om$-continuable
holomorphic germs at~$0$ without assuming that any of them extends to an entire
function.
The main result of this article is


\begin{thm}	\label{thmOmstbgROmstb}
Let $\Om$ be a non-empty closed discrete subset of~$\C$.
Then the space $\hat\gR_\Om$ is stable under convolution if and only if $\Om$ is stable under
addition. 
\end{thm}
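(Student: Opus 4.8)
The plan is to prove the two implications separately, with the easy direction first. For the ``only if'' part, I would argue by contraposition: if $\Om$ is not stable under addition, there exist $\om_1,\om_2\in\Om$ with $\om_1+\om_2\notin\Om$. The strategy is to cook up explicit germs $\hat\ph,\hat\psi\in\hat\gR_\Om$ whose convolution has a genuine singularity at $\om_1+\om_2$, hence is not $\Om$-continuable. The natural candidates are built from logarithms: taking $\hat\ph$ and $\hat\psi$ of the form (entire)$\,\times\log(\ze-\om_i)$ or suitable branches thereof, as in the Examples preceding the theorem, the convolution integral $\int_0^\ze\hat\ph(\xi)\hat\psi(\ze-\xi)\,\dd\xi$ develops a logarithmic (or $\log^2$) singularity precisely when $\xi$ can be simultaneously near $\om_1$ and $\ze-\xi$ near $\om_2$, i.e.\ at $\ze=\om_1+\om_2$. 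The computation is a variant of the classical fact that $\log*\log$ has a singularity at the sum of the singular points; I would make it rigorous by analytically continuing along a path and extracting the variation (monodromy) around $\om_1+\om_2$, showing it is nonzero. The delicate point here is to check that the singularity at $\om_1+\om_2$ really survives and is not killed by the particular branches chosen, and that the germs one writes down are honestly in $\hat\gR_\Om$ (single-valued away from $\Om$ with the right local behaviour).

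The substantial direction is ``if'': assuming $\Om+\Om\subseteq\Om$, show $\hat\gR_\Om$ is stable under convolution. Fix $\hat\ph,\hat\psi\in\hat\gR_\Om$; we must continue $\hat\ph*\hat\psi$ along an arbitrary path $\ga$ of $\C\setminus\Om$ issuing from a small initial point. The heuristic is that
\[
\hat\ph*\hat\psi(\ze)=\int_0^\ze\hat\ph(\xi)\hat\psi(\ze-\xi)\,\dd\xi,
\]
and as $\ze$ travels along $\ga$ one wants to deform the contour of integration so that, for every $\ze=\ga(t)$, the variable $\xi$ runs along a path from $0$ to $\ze$ that avoids $\Om$ and along which simultaneously $\ze-\xi$ avoids $\Om$ — the latter because $\hat\psi$ must be evaluated at $\ze-\xi$. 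The obstruction to such a deformation is a ``pinch'': $\xi$ trapped between a point of $\Om$ (a singularity of $\hat\ph$) and $\ze$ minus a point of $\Om$ (a singularity of $\hat\psi$), which forces $\ze$ to lie in $\Om+\Om$ — and this is exactly what the hypothesis $\Om+\Om\subseteq\Om$ forbids, since $\ga$ avoids $\Om$. So the plan is: (1) set up, following the ideas of \cite{Eca81} and \cite{CNP} but with the announced new device, a family of integration paths $\xi\mapsto s_t(\xi)$ in the $\xi$-plane depending on the time $t$ along $\ga$, a ``symmetric $\Om$-homotopy'', with the symmetry constraint that both $\xi$ and $\ze(t)-\xi$ stay off $\Om$; (2) construct this family as the flow of a suitable non-autonomous vector field, chosen to push the contour away from the forbidden loci $\Om$ and $\ze(t)-\Om$ while keeping the endpoints at $0$ and $\ze(t)$; (3) show the flow exists for all relevant times precisely because no pinch occurs, using $\Om+\Om\subseteq\Om$; (4) conclude that the integral $\int_{s_t}\hat\ph(\xi)\hat\psi(\ze(t)-\xi)\,\dd\xi$, suitably interpreted with $\cont$ of both germs along the moving contour (as in the careful bookkeeping spelled out after Lemma~\ref{lemeasyconvol}), defines the analytic continuation of $\hat\ph*\hat\psi$ along $\ga$, and that its endpoint germ depends only on the homotopy class of $\ga$ in $\C\setminus\Om$.

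The main obstacle is step (2)–(3): constructing the non-autonomous vector field whose flow realizes the contour deformation and proving the flow does not blow up or run into $\Om$ before time $t=1$. One must design the vector field so that the moving contour simultaneously (a) has fixed left endpoint $0$, (b) has right endpoint exactly $\ga(t)$ (so the field must carry a prescribed velocity $\ga'(t)$ at that end), (c) keeps a positive distance from both $\Om$ and $\ga(t)-\Om$, uniformly enough that the flow is defined on the whole parameter interval. Controlling (c) is where $\Om+\Om\subseteq\Om$ enters decisively: a collision of the two forbidden sets at a point $\xi$ would mean $\xi\in\Om$ and $\ga(t)-\xi\in\Om$, hence $\ga(t)\in\Om+\Om\subseteq\Om$, contradicting $\ga(t)\in\C\setminus\Om$; so the two ``walls'' the contour must stay between never actually touch, leaving room to slip the contour through. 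Turning this qualitative non-collision into the quantitative estimates needed for global existence of the flow — a lower bound on the available corridor width depending only on $\ga$ and $\Om$ — is the technical heart of the proof, and is presumably where the explicit choice of vector field (the ``novel feature'' advertised in the introduction) does its work.
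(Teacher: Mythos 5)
Your overall plan coincides with the paper's: an explicit counterexample for necessity, and for sufficiency a moving family of integration contours from $0$ to $\ga(t)$, symmetric in the sense that $\xi$ and $\ga(t)-\xi$ simultaneously avoid $\Om$, generated by the flow of a non-autonomous vector field, with $\Om+\Om\subset\Om$ ruling out the pinch. Two comments on the details. For necessity, the paper takes the even simpler germs $\hat\ph_j(\ze)=\frac{1}{\ze-\om_j}$, for which the convolution is computed in closed form as $\frac{1}{\ze-\om_1-\om_2}\big(L_1(\ze)+L_2(\ze)\big)$ with $L_j(\ze)=\log(1-\ze/\om_j)$; the monodromy check you defer then reduces to a winding-number computation showing that some branch of $L_1+L_2$ does not vanish at $\om_1+\om_2$, so some branch of the convolution has a genuine pole there. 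Your logarithmic germs would presumably work too, but the ``delicate point'' you flag is precisely the content of the argument, and with simple poles it is completely explicit. Note also that any failure of additive stability automatically occurs with $\om_1,\om_2\neq 0$, so the counterexample germs are regular at the origin.

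On sufficiency, you misplace the technical difficulty, and this hides a genuine gap. Global existence of the flow needs no ``corridor width'' estimate: the paper's field is $X(\ze,t)=\frac{\eta(\ze)}{\eta(\ze)+\eta(\ga(t)-\ze)}\,\ga'(t)$ with $\eta\ge 0$ vanishing exactly on $\Om$ (additive stability makes the denominator positive), so $|X|\le|\ga'|$ is bounded and the flow is complete; moreover $X(\cdot,t)$ vanishes on $\Om$, so the points of $\Om$ are fixed and uniqueness of solutions forces the flow to preserve $\C\setminus\Om$, while the identity $X(\ga(t)-\ze,t)=\ga'(t)-X(\ze,t)$ propagates the symmetry for free. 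The real obstacle is elsewhere: your scheme requires both $X(0,t)=0$ (to pin the left endpoint of the contour) and $X(\ga(t),t)=\ga'(t)$ (to drive the right endpoint along $\ga$), which is contradictory at any time with $\ga(t)=0$. When $0\in\Om$ this never happens, but when $0\notin\Om$ the path $\ga$ may pass through the origin, and the vector-field construction breaks down there. The paper resolves this by cutting $\ga$ at its (finitely many) zeros, using near each zero the elementary affine homotopy $H(s,t)=h(s)+s\big(\ga(t)-\ga(a)\big)$, and keeping a quantitative budget $\de_j=\de_0-j\eps$ on the allowed approach to $\Om$ so that the pieces concatenate. Finally, your step (4) also needs its analytic counterpart (the paper's Lemma~\ref{lemtechnic}): a uniform lower bound on the radii of convergence of $\cont_{H_{t|s}}\hat\ph$ along the homotopy, obtained by compactness, without which the deformed integrals cannot be glued into a single analytic continuation along $\ga$.
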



The necessary and sufficient condition on~$\Om$ is satisfied by the typical
examples~$\Z$ or $2\pi\I\Z$, but also by~$\N^*$, $\Z+\I\Z$, $\N^*+\I\N$ or
$\{ m+n\sqrt{2} \mid m,n\in\N^* \}$ for instance.

The rest of the article is dedicated to the proof of Theorem~\ref{thmOmstbgROmstb}.
The necessity of the condition on~$\Om$ will follow from the following elementary
example:


\begin{exa}[\cite{CNP}]	\label{exanohope}
Let us consider $\om_1,\om_2\in\C^*$, $\hat\ph_1(\ze) = \frac{1}{\ze-\om_1}$,
$\hat\ph_2(\ze) = \frac{1}{\ze-\om_2}$ and study
\[
\hat\chi(\ze) = \hat\ph_1*\hat\ph_2(\ze) = 
\int_0^\ze \frac{1}{(\xi-\om_1)(\ze-\xi-\om_2)} \,\dd\xi,
\qquad |\ze| < \min\big\{ |\om_1|,|\om_2| \big\}.
\]
The formula
\[
\frac{1}{(\xi-\om_1)(\ze-\xi-\om_2)}= \frac{1}{\ze-\om_1-\om_2} \left(
\frac{1}{\xi-\om_1} + \frac{1}{\ze-\xi-\om_2} 
\right)
\]
shows that, for any $\ze\neq\om_1+\om_2$ of modulus $< \min\big\{
|\om_1|,|\om_2| \big\}$, one can write
\begin{equation}
\hat\chi(\ze) = \frac{1}{\ze-\om_1-\om_2} \big( L_1(\ze) + L_2(\ze) \big),
\qquad L_j(\ze) \defeq \int_0^\ze \frac{\dd\xi}{\xi-\om_j}
\end{equation}
(with the help of the change of variable $\xi \mapsto \ze-\xi$ in the case of~$L_2$).

Removing the half-lines $\om_j[1,+\infty)$ from~$\C$, we obtain a cut
plane~$\De$ in which~$\hat\chi$ has a meromorphic continuation (since $[0,\ze]$ avoids the
points~$\om_1$ and~$\om_2$ for all $\ze\in\De$). 
We can in fact follow the meromorphic continuation of~$\hat\chi$ along any path
which avoids~$\om_1$ and~$\om_2$, because 
\[
L_j(\ze) = -\int_0^{\ze/\om_j} \frac{\dd\xi}{1-\xi} =
\log\Big( 1-\frac{\ze}{\om_j} \Big) \in \hat\gR_{\{\om_j\}}.
\]
We used the words ``meromorphic continuation'' and not ``analytic continuation''
because of the factor $\frac{1}{\ze-\om_1-\om_2}$.
The conclusion is thus only $\hat\chi \in \hat\gR_\Om$, with $\Om \defeq \{\om_1,\om_2,\om_1+\om_2\}$.

\medskip

-- If $\om\defeq\om_1+\om_2\in \De$, the principal branch of~$\hat\chi$ (\ie its
meromorphic continuation to~$\De$) has a removable singularity%
\footnote{This is consistent with the well-known fact that the space of holomorphic
functions of an open set~$\De$ which is star-shaped \wrt~$0$ is stable under convolution.}
at $\om$, because 
$(L_1+L_2)(\om) = \int_0^\om \frac{\dd\xi}{\xi-\om_1} +
\int_0^\om \frac{\dd\xi}{\xi-\om_2} = 0$
in that case (by the change of variable $\xi \mapsto \om-\xi$ in one of the
integrals).
But it is easy to see that this does not happen for all the branches
of~$\hat\chi$: when considering all the paths~$\ga$ going from~$0$ to~$\om$ and
avoiding~$\om_1$ and~$\om_2$, we have
\[
\cont_\ga L_j(\om) = \int_\ga \frac{\dd\xi}{\xi-\om_j}, \qquad j=1,2,
\]
hence $\frac{1}{2\pi\I}\big(\cont_\ga L_1(\om) + \cont_\ga L_2(\om)\big)$ is the
sum of the winding numbers around~$\om_1$ and~$\om_2$ of the loop obtained by
concatenating~$\ga$ and the line segment $[\om,0]$;
elementary geometry shows that this sum of winding numbers can take any
integer value, but whenever this value is non-zero the corresponding
branch of~$\hat\chi$ does have a pole at~$\om$.

\medskip

-- The case $\om\notin\De$ is slightly different. Then we can write 
$\om_j = r_j\,\ee^{\I\th}$ with $r_1,r_2>0$ 
and consider the path~$\ga_0$ which follows the segment $[0,\om]$ except that it
circumvents $\om_1$ and~$\om_2$ by small half-circles travelled anti-clockwise
(notice that $\om_1$ and~$\om_2$ may coincide);
an easy computation yields
\[
\cont_{\ga_0} L_1(\om) = \int_{-r_1}^{-1} \frac{\dd\xi}{\xi}
+ \int_1^{r_2} \frac{\dd\xi}{\xi}
+ \int_{\Ga_0} \frac{\dd\xi}{\xi},
\]
where $\Ga_0$ is the half-circle from~$-1$ to~$1$ with radius~$1$ travelled
anti-clockwise,
hence $\cont_{\ga_0} L_1(\om) = \log\frac{r_2}{r_1} + \I\pi$, 
similarly $\cont_{\ga_0} L_2(\om) = \log\frac{r_1}{r_2} + \I\pi$, 
therefore $\cont_{\ga_0} L_1(\om) + \cont_{\ga_0} L_2(\om) = 2\pi\I$ is non-zero and this
again yields a branch of~$\hat\chi$ with a pole at~$\om$ (and infinitely many
others by using other paths than~$\ga_0$).

\medskip

In all cases, there are paths from~$0$ to $\om_1+\om_2$ which avoid~$\om_1$
and~$\om_2$ and which are not paths of analytic continuation for~$\hat\chi$.
This example thus shows that $\hat\gR_{\{\om_1,\om_2\}}$ is \emph{not} stable under
convolution: it contains $\hat\ph_1$ and~$\hat\ph_2$ but not $\hat\ph_1*\hat\ph_2$.
\end{exa}

Now we see that for $\hat\gR_\Om$ to be stable under convolution it is necessary
that $\Om$ be stable under addition: if not, one can find
$\om_1,\om_2\in\Om$ such that $\om_1+\om_2\notin \Om$ and
Example~\ref{exanohope} then yields $\hat\ph_1,\hat\ph_2\in\hat\gR_\Om$ with
$\hat\ph_1*\hat\ph_2 \notin \hat\gR_\Om$.
This gives the easy part of Theorem~\ref{thmOmstbgROmstb}.


\section{Proof of the main result: Analytic part}


From now on we assume that~$\Om$ is stable under addition.
Our aim is to prove that this is sufficient to entail the stability under
convolution of~$\hat\gR_\Om$.
We begin with a definition, illustrated by Figure~\ref{fig:SymHom}:
\begin{Def}	\label{defSymOmHom}
A continuous map
$H \col I \times J \to \C$, where $I=[0,1]$ and $J$ is a compact interval
of~$\R$, is called a symmetric $\Om$-homotopy if, for each $t \in J$,
\[ s \in I \mapsto H_t(s) \defeq H(s,t) \]
defines a path which satisfies 
\begin{enumerate}
\item $H_t(0) = 0$,
\item $H_t\big( (0,1] \big) \subset \C\setminus\Om$,
\item $H_t(1) - H_t(s) = H_t(1-s)$ for every $s \in I$.
\end{enumerate}
We then call endpoint path of~$H$ the path
\[
\Ga_H \col t \in J \mapsto H_t(1).
\]
Writing $J=[a,b]$, we call $H_a$ (\resp $H_b$) the initial path of~$H$ (\resp its final path).
\end{Def}


\begin{figure}
\begin{center}
\epsfig{file=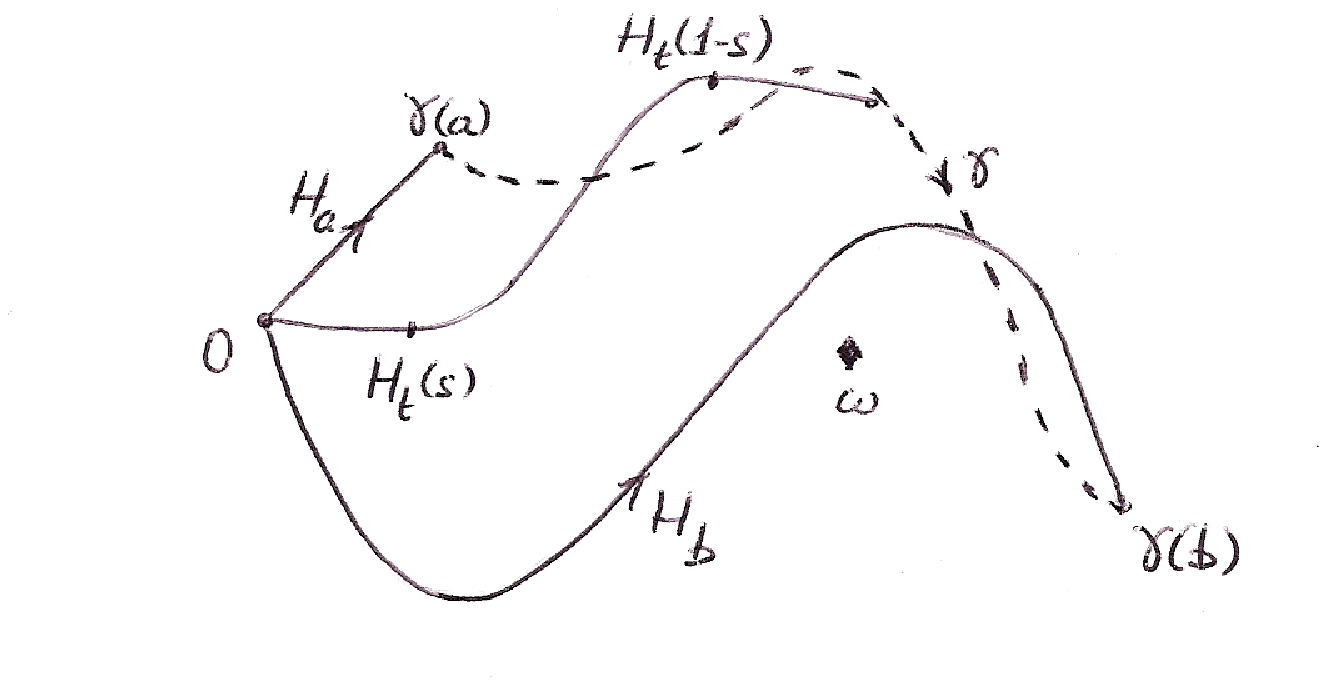,height=2.2in,angle = 0}
\vspace{-.55cm}

\caption{%
A symmetric $\Om$-homotopy, with its initial path $H_a$, its final path $H_b$
and its endpointpath $\ga = \Ga_H$.}
\label{fig:SymHom}
\end{center}
\end{figure}

The first two conditions imply that each path~$H_t$ is a path of analytic
continuation for any $\hat\ph\in\hat\gR_\Om$, in view of
Remark~\ref{reminitialpoint}.

We shall use the notation $H_{t|s}$ for the truncated paths $(H_t)_{|[0,s]}$,
$s\in I$, $t\in J$ (analogously to what we did when commenting
Lemma~\ref{lemeasyconvol}).
Here is a technical statement we shall use:
\begin{lemma}	\label{lemtechnic}
For a symmetric $\Om$-homotopy~$H$ defined on $I\times J$,
there exists $\de>0$ such that, for any
$\hat\ph\in\hat\gR_\Om$ and $(s,t) \in I\times J$, the radius of convergence of the
holomorphic germ $\cont_{H_{t|s}}\hat\ph$ at~$H_t(s)$ is at least~$\de$.
\end{lemma}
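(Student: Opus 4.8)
The plan is to exploit compactness of $I \times J$ together with the fact that, by Remark~\ref{reminitialpoint}, every $H_t$ is a path of analytic continuation for every $\hat\ph \in \hat\gR_\Om$. The subtle point is that the radius of convergence of $\cont_{H_{t|s}}\hat\ph$ depends a priori on $\hat\ph$, so a naive per-germ argument would only give a $\de$ depending on $\hat\ph$. To get a uniform $\de$, I would first observe that the image $K \defeq H(I \times J)$ is a compact subset of $\C$, and that $K' \defeq K \setminus \{0\}$ (or rather $K \setminus \D_{\rho/2}$, say) is a compact subset of $\C \setminus \Om$, since $\Om$ is closed and $H_t((0,1]) \subset \C\setminus\Om$ while near $0$ we are inside the disc $\D_\rho$ which meets $\Om$ at most in $\{0\}$. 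Hence there is a positive distance $d_0 = \dist(K', \Om) > 0$. The first step is therefore to produce a single positive number $r_0$ — something like $\min\{\rho/2,\ d_0\}$ — with the property that for every point $\ze^* \in K$, the open disc $D(\ze^*, r_0)$ is disjoint from $\Om$ (here the case $\ze^* \in \D_\rho$ is handled separately, using that only $0$ can lie in $\Om\cap\D_\rho$ and that $\hat\ph$ is holomorphic on all of $\D_\rho$).

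The second step is to convert this geometric bound into a bound on the radius of convergence. The key sub-claim is: if $\ga \col [0,\sigma] \to \C$ is any path along which $\hat\ph$ continues, and every disc $D(\ga(\tau), r_0)$, $\tau\in[0,\sigma]$, avoids $\Om$, then the germ $\cont_\ga \hat\ph$ at $\ga(\sigma)$ has radius of convergence $\ge r_0$. This is the standard fact that analytic continuation along a path can be realized by a chain of discs of a fixed radius whenever the path stays at distance $\ge r_0$ from the singular set: the function obtained is holomorphic on the union of these discs, in particular on $D(\ga(\sigma), r_0)$. Applying this to $\ga = H_{t|s}$ — whose image is contained in $K$, on which all discs of radius $r_0$ avoid $\Om$ — immediately gives radius of convergence $\ge r_0$ for $\cont_{H_{t|s}}\hat\ph$, uniformly in $(s,t)$ and in $\hat\ph$. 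So one may take $\de = r_0$.

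The main obstacle is making the disc-chain argument clean near the origin, where the path may pass through $0$ and $0$ may belong to $\Om$: there $\cont_{H_{t|s}}\hat\ph$ is simply the original germ $\hat\ph$ (or an obvious continuation of it inside $\D_\rho$), whose radius of convergence is $\ge \rho \ge r_0$ by Remark~\ref{reminitialpoint}, so one treats the initial portion of the path separately and only invokes $\dist(\cdot,\Om)\ge r_0$ once the path has left $\D_{\rho/2}$. A minor additional care: one must check that the union-of-discs function is well-defined (single-valued on each disc of the chain) — but this is exactly the content of "admitting analytic continuation along $H_t$," which Remark~\ref{reminitialpoint} guarantees for every $\hat\ph\in\hat\gR_\Om$. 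No compactness of a family of germs is needed; the uniformity in $\hat\ph$ comes for free because the lower bound $r_0$ is purely geometric, extracted from $\Om$ and the single compact set $K = H(I\times J)$.
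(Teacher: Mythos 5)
Your overall strategy --- compactness plus the monodromy-theorem fact that a germ continued along a path all of whose points lie at distance $\ge r_0$ from $\Om$ acquires radius of convergence $\ge r_0$ --- is the right one, and it works essentially verbatim when $0\notin\Om$. But there is a genuine gap when $0\in\Om$, a case the paper explicitly allows and needs (e.g.\ $\Om=\N$ or $\Om=\Z$). Your dichotomy is taken in the \emph{image}: either the endpoint $\ze^*=H_t(s)$ lies in $K'=H(I\times J)\setminus\D_{\rho/2}$, or it lies in $\D_{\rho/2}$, and in the latter case you invoke the holomorphy of $\hat\ph$ on $\D_\rho$. That only controls the \emph{principal} branch, i.e.\ the case where the whole truncated path $H_{t|s}$ has stayed inside $\D_{\rho/2}$. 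A path $H_t$ may leave $\D_{\rho/2}$, wind around singularities, and return to a point $\ze^*$ close to $0$; the germ $\cont_{H_{t|s}}\hat\ph$ at such a point is a non-principal branch, which may well be singular at $0$. Concretely, for $\hat\ph(\ze)=-\ze^{-1}\log(1-\ze)\in\hat\gR_{\N}$ (one of the examples of Section~\ref{secOmcont}), the branch obtained after a loop around $1$ is $\ze^{-1}\bigl(-\log(1-\ze)+2\pi\I k\bigr)$ with $k\neq0$, which has a pole at $0$: its radius of convergence at $\ze^*$ is exactly $|\ze^*|$, not $\ge\rho/2$. Your compact set $K'$ excludes $\D_{\rho/2}$ and therefore does not see these returning points, and your ``treat the initial portion of the path separately'' caveat does not cover returns into $\D_{\rho/2}$ occurring after the path has left it.

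The fix --- and this is exactly what the paper does --- is to take the dichotomy in the \emph{parameter space} rather than in the image: let $U$ be the set of $(s,t)$ such that $H\bigl([0,s]\times\{t\}\bigr)\subset\D_{\rho/2}$, and $K\defeq I\times J\setminus U$. For $(s,t)\in U$ the germ is the principal branch and your $\rho/2$ bound applies. The set $K$ is a compact subset of $(0,1]\times J$ (it is closed, and $(0,t)\notin K$ since $H(0,t)=0$), so $H(K)$ is a compact subset of $\C\setminus\Om$ by condition (ii) of Definition~\ref{defSymOmHom}; hence $\dist\bigl(H(K),\Om\bigr)>0$, and this bounds from below the radius of convergence at all remaining points, including the non-principal branches near $0$. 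With this modification your argument coincides with the paper's proof.
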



\begin{proof}
Let $\rho$ be as in Remark~\ref{reminitialpoint}.
Consider %
\[
U\defeq \big\{\, (s,t) \in I\times J \mid H\big( [0,s]\times\{t\} \big) 
\subset\Ddem \,\},
\qquad K \defeq I\times J \setminus U.
\]
Writing $K = 
\big\{\, (s,t) \in I\times J \mid \exists s'\in[0,s] \;\text{s.t.}\;
H(s',t) \in \C\setminus\Ddem \,\}$, 
we see that $K$ is a compact subset of $I\times J$ which is contained in
$(0,1]\times J$.
Thus $H(K)$ is a compact subset of $\C\setminus\Om$,
and $\de \defeq \min\big\{ \dist\big( H(K), \Om \big), \rho/2 \big\} > 0$.
Now, for any $s$ and~$t$, 
\begin{enumerate}[--]
\item either $(s,t)\in U$, then 
the truncated path~$H_{t|s}$ lies in~$\Ddem$, hence
$\cont_{H_{t|s}}\hat\ph$ is a holomorphic germ
at~$H_t(s)$ with radius of convergence $\ge \de$;
\item or $(s,t)\in K$, and then $\dist(H_t(s),\Om)\ge\de$, which yields the same
conclusion for the germ $\cont_{H_{t|s}}\hat\ph$.
\end{enumerate}
\end{proof}


The third condition in Definition~\ref{defSymOmHom} means that each path~$H_t$
is symmetric \wrt\ its midpoint~$\demi H_t(1)$.
Here is the motivation behind this requirement:
\begin{lemma}	\label{lemcontsymOm}
Let $\ga \col [0,1] \to \C \setminus \Om$ be a path such that $\ga(0) \in
\D_\rho$, with~$\rho$ as in Remark~\ref{reminitialpoint}.
If there exists a symmetric $\Om$-homotopy whose endpoint path coincides
with~$\ga$ and whose initial path is contained in~$\D_\rho$, 
then any convolution product $\hat\ph*\hat\psi$ with $\hat\ph,
\hat\psi\in\hat\gR_\Om$ can be analytically continued along~$\ga$.
\end{lemma}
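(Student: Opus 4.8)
The plan is to represent $\hat\ph*\hat\psi$ along the endpoint path $\ga = \Ga_H$ by an explicit integral over the symmetric $\Om$-homotopy, mimicking formula~\eqref{eqcontAph} but now using the symmetry condition to split the convolution integral $\int_0^{\ze} \hat\ph(\xi)\hat\psi(\ze-\xi)\,\dd\xi$ into two pieces, each of which can be pushed along $H_t$ on the $\xi$-variable \emph{and} on the $\ze-\xi$ variable simultaneously. The key point of Definition~\ref{defSymOmHom}(iii) is that if $\xi$ travels along $H_{t|s}$ then $\ze-\xi = H_t(1)-H_t(s) = H_t(1-s)$ automatically travels along the reversed path $H_t$; so the substitution $\xi\mapsto\ze-\xi$ that was used ad hoc in Example~\ref{exanohope} and Lemma~\ref{lemeasyconvol} becomes available \emph{uniformly in $t$}. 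Concretely, for $t\in J$ and $\ze$ close to $\Ga_H(t)=H_t(1)$, I would define
\[
\Phi(t,\ze) \defeq \int_{H_t} \hat\ph(\xi)\,(\cont_{?}\hat\psi)(\ze-\xi)\,\dd\xi
\]
where the branch of $\hat\psi$ at $\ze-\xi$ is the one obtained by continuing $\hat\psi$ along $s'\mapsto \ze - H_t(s')$ (a path in $\C\setminus\Om$ by symmetry, at least for $\ze=H_t(1)$, and by a small perturbation argument for nearby $\ze$), plus the two correction integrals near the endpoints $0$ and $\Ga_H(t)$ exactly as in~\eqref{eqcontAph}, using the radius-of-convergence bound $\de$ from Lemma~\ref{lemtechnic} to guarantee these make sense.

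First I would check that $\Phi(a,\cdot)$ agrees, near the common endpoint of $H_a$ and the disc of convergence, with the genuine convolution germ $\hat\ph*\hat\psi$: this uses that the initial path $H_a$ lies in $\D_\rho$, so no analytic continuation of either factor is yet needed and $\Phi(a,\ze)$ is literally $\int_0^\ze \hat\ph(\xi)\hat\psi(\ze-\xi)\,\dd\xi$ after deforming $H_a$ to the segment $[0,\ze]$ inside the (star-shaped) disc. Then I would show $t\mapsto\Phi(t,\cdot)$ is continuous in $t$ with values in germs at $\Ga_H(t)$, and that successive germs overlap: for $t$ and $t'$ close, $\Phi(t,\cdot)$ and $\Phi(t',\cdot)$ are analytic continuations of one another. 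This is where Lemma~\ref{lemtechnic} does the real work — the uniform lower bound $\de$ on radii of convergence lets me cover $[a,b]$ by finitely many subintervals on which everything stays in a fixed disc, so the chain of overlapping germs assembles into an analytic continuation of $\hat\ph*\hat\psi$ all the way along $\ga$. I would also verify that deforming $H$ through symmetric $\Om$-homotopies does not change $\Phi(t,\cdot)$ (a Cauchy-theorem/homotopy-invariance argument on the $\xi$-integral, valid because the integrand is holomorphic off $\Om$ and the relevant region misses $\Om$ by construction), which is what makes $\Phi(t,\cdot)$ depend only on the path $\ga$, not on the chosen homotopy.

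The main obstacle, I expect, is making rigorous sense of the double dependence on the branch of $\hat\psi$: as $\xi$ moves along $H_{t|s}$ the argument $\ze-\xi$ traces a path that depends on $\ze$, and one must argue that for all $\ze$ in a neighbourhood of $H_t(1)$ this path stays in $\C\setminus\Om$ and is homotopic (rel endpoints, in $\C\setminus\Om$) to the reversed $H_t$, so that the correct branch of $\hat\psi$ is unambiguously $\cont$ of $\hat\psi$ along (a truncation of) the reverse of $H_t$. For $\ze$ exactly equal to $H_t(1)$ this is condition~(iii) verbatim; for nearby $\ze$ it is a compactness/uniform-continuity argument using $\de$ again. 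A secondary technical point is the bookkeeping of the three integrals (a fixed initial correction near $0$ which is harmless since $\hat\ph,\hat\psi$ are honest germs there, the main integral over $H_t$, and the terminal correction near $\Ga_H(t)$), and checking that their sum is independent of the splitting point — routine once the branch issue is settled. I would close by remarking that this construction, applied with the factors exchanged, also recovers commutativity of the continued convolution, which will be convenient later; and I would set up the notation so that the next section only has to \emph{produce} symmetric $\Om$-homotopies with prescribed endpoint path, the analytic content being entirely contained in this lemma.
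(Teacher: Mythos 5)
Your proposal is correct and follows essentially the same route as the paper: the germ at $\Ga_H(t)$ is defined by the integral over $H_t$ plus a terminal correction, with the branch of $\hat\psi$ at $\ze-\xi$ fixed by the symmetry condition (continuation along the truncation $H_{t|1-s}$), Lemma~\ref{lemtechnic} supplying the uniform radius $\de$ that makes the germs well defined and lets finitely many of them chain together, and the Cauchy theorem in the star-shaped disc $\D_\rho$ identifying the $t=a$ germ with $\hat\ph*\hat\psi$. The only cosmetic difference is that your initial correction integral near $0$ is vacuous here since $H_t(0)=0$, so the paper's formula~\eqref{eqpremform} has just two terms.
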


\begin{proof}
We assume that $H$ is defined on $I\times J$ and we set $\ga \defeq \Ga_H$.
Let $\hat\ph, \hat\psi\in\hat\gR_\Om$ and, for $t \in J$, consider the formula
\beglabel{eqpremform}
\hat\chi_t(\ze) = \int_{H_t} \hat\ph(\xi) \hat\psi(\ze-\xi)\,\dd\xi
+ \int_{\ga(t)}^\ze \hat\ph(\xi) \hat\psi(\ze-\xi)\,\dd\xi
\elabel
(recall that $\ga(t) = H_t(1)$). We shall check that $\hat\chi_t$ is a
well-defined holomorphic germ at~$\ga(t)$ and that it provides the analytic
continuation of $\hat\ph*\hat\psi$ along~$\ga$.

\medskip

\noindent a) 
The idea is that when $\xi$ moves along~$H_t$, $\xi = H_t(s)$ with $s \in I$,
we can use for ``$\hat\ph(\xi)$'' the analytic continuation of~$\hat\ph$
along the truncated path~$H_{t|s}$;
correspondingly, if~$\ze$ is close to~$\ga(t)$, then $\ze-\xi$ is close to
$\ga(t)-\xi = H_t(1) - H_t(s) = H_t(1-s)$, thus for
``$\hat\psi(\ze-\xi)$'' we can use the analytic continuation of~$\hat\psi$
along~$H_{t|1-s}$.
In other words, setting $\ze=\ga(t)+\sig$, we wish to interpret~\eqref{eqpremform} as
\begin{multline}	\label{eqsecform}
\hat\chi_t(\ga(t)+\sig) \defeq \int_0^1 
(\cont_{H_{t|s}}\hat\ph)(H_t(s)) (\cont_{H_{t|1-s}}\hat\psi)(H_t(1-s)+\sig)
H_t'(s)\,\dd s \\
+ \int_0^1 (\cont_{H_t}\hat\ph)(\ga(t)+u\sig) \hat\psi((1-u)\sig)\sig\,\dd u
\end{multline}
(in the last integral, we have performed the change variable $\xi =
\ga(t)+u\sig$; it is the germ of~$\hat\psi$ at the origin that we use
there).

Lemma~\ref{lemtechnic} provides $\de>0$ such that, by regular dependence of the
integrals upon the parameter~$\sig$, the \rhs\ of~\eqref{eqsecform} is
holomorphic for $|\sig|<\de$.
We thus have a family of analytic elements $(\hat\chi_t,D_t)$, $t \in J$, with
$D_t \defeq D\big(\ga(t), \de\big)$.

\medskip

\noindent b) 
For $t$ small enough, the path $H_t$ is contained in~$\D_\rho$ which is open and simply
connected; then, for $|\ze|$ small enough, the line segment $[0,\ze]$ and
the concatenation of~$H_t$ and $[\ga(t),\ze]$ are homotopic in~$\D_\rho$, hence the
Cauchy theorem implies $\hat\chi_t(\ze) = \hat\ph*\hat\psi(\ze)$.

\medskip

\noindent c)
By uniform continuity, there exists $\eps>0$ such that, for any $t_0,t \in J$,
\beglabel{inequnifcont}
|t-t_0| \le \eps 
\quad\Longrightarrow\quad
|H_t(s)-H_{t_0}(s)| < \de/2 \quad \text{for all $s \in I$}.
\elabel
To complete the proof, we check that, for any $t_0, t$ in~$J$ such that
$t_0\le t \le t_0+\eps$, we have $\hat\chi_{t_0} \equiv \hat\chi_{t}$ in
$D\big(\ga(t_0),\de/2)$ (which is contained in $D_{t_0}\cap D_{t}$).

Let $t_0,t\in J$ be such that $t_0\le t \le t_0+\eps$ and let $\ze \in D\big(\ga(t_0),\de/2)$.
By Lemma~\ref{lemtechnic} and~\eqref{inequnifcont}, we have for every $s \in I$
\begin{align*}
&\cont_{H_{t|s}}\hat\ph\big(H_t(s)\big) = \cont_{H_{t_0|s}}\hat\ph\big(H_t(s)\big), \\
&\cont_{H_{t|1-s}}\hat\psi\big(\ze-H_t(s)\big) =
\cont_{H_{t_0|1-s}}\hat\psi\big(\ze-H_t(s)\big)
\end{align*}
(for the latter identity, write
$\ze-H_t(s) = H_t(1-s) + \ze - \ga(t) = H_{t_0}(1-s) + \ze-\ga(t_0) +
H_{t_0}(s)-H_t(s)$, thus this point belongs to $D\big(H_t(1-s),\de)
\cap D\big(H_{t_0}(1-s),\de)$).
Moreover, $[\ga(t),\ze] \subset D\big(\ga(t_0),\de/2)$ by convexity,
hence $\cont_{H_{t}}\hat\ph \equiv \cont_{H_{t_0}}\hat\ph$ on this line segment,
and we can write
\begin{multline*}	
\hat\chi_t(\ze) = \int_0^1 
(\cont_{H_{t_0|s}}\hat\ph)(H_t(s)) (\cont_{H_{t_0|1-s}}\hat\psi)(\ze-H_t(s))
H_t'(s)\,\dd s \\
+ \int_{\ga(t)}^\ze (\cont_{H_{t_0}}\hat\ph)(\xi) \hat\psi(\ze-\xi)\,\dd\xi.
\end{multline*}
We then get $\hat\chi_{t_0}(\ze) = \hat\chi_{t}(\ze)$ from the Cauchy theorem by
means of the homotopy induced by~$H$ between 
the concatenation of~$H_{t_0}$ and $[\ga(t_0),\ze]$ 
and the concatenation of~$H_{t}$ and $[\ga(t),\ze]$.
\end{proof}


\begin{rem}
Definition~\ref{defSymOmHom} is not really new: when the initial path~$H_a$ is a
line segment contained in~$\D_\rho$, the final path~$H_b$ is what
\'Ecalle calls a ``symmetrically contractile path'' in \cite{Eca81}.
The proof of Lemma~\ref{lemcontsymOm} shows that the analytic continuation of
$\hat\ph*\hat\psi$ until the endpoint $H_b(1)=\Ga_H(b)$ can be computed by the usual
integral taken over~$H_b$ (however, it usually cannot be computed as the same
integral over the endpoint path~$\Ga_H$, even when the latter integral is
well-defined).
\end{rem}


\section{Proof of the main result: Geometric part}


\subsection{The key lemma}


In view of Lemma~\ref{lemcontsymOm}, the proof of Theorem~\ref{thmOmstbgROmstb}
will be complete if we prove the following purely geometric result:

\begin{lemma}	\label{lemkey}
For any path $\ga \col I= [0,1]\to \C\setminus\Om$ such that $\ga(0)\in \D^*_\rho$
and the left and right derivatives $\ga'_\pm$ do not vanish on~$I$,
there exists a symmetric $\Om$-homotopy~$H$ on $I\times I$ whose endpoint path is~$\ga$ and
whose initial path is a line segment, \ie $\Ga_H=\ga$ and $H_0(s)\equiv s\ga(0)$.
\end{lemma}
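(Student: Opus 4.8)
The goal is to thicken the path $\ga$ into a symmetric $\Om$-homotopy $H(s,t)$ starting from the radial homotopy $H_0(s)=s\ga(0)$. The natural idea is to build $H$ by flowing $\ga$ backward along a suitable time-dependent vector field: I would look for a non-autonomous vector field $X(t,\ze)$ on $\C$ such that the solution of $\frac{\dd}{\dd t}\Phi^t(\ze)=X(t,\Phi^t(\ze))$, $\Phi^1=\mathrm{id}$, deforms $\ga(1)$ along $\ga$ (i.e.\ $\Phi^t(\ga(1))=\ga(t)$) while keeping the whole picture inside $\C\setminus\Om$. Concretely, set $H_t(s)$ to be the point obtained by transporting the symmetric homotopy. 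The symmetry condition (iii), $H_t(1-s)=H_t(1)-H_t(s)$, is the delicate structural constraint: it says each $H_t$ is centrally symmetric about its midpoint $\tfrac12\ga(t)$. The cleanest way to enforce it is to define $H_t$ directly in terms of a single ``half-path'': let $K\col I\times I\to\C$ be a homotopy with $K_t(0)=0$, $K_0(s)=s\ga(0)/2$ say, and $K_t(1)=\tfrac12\ga(t)$, and then set
\[
H_t(s)=\begin{cases} K_t(2s)& 0\le s\le \tfrac12,\\ \ga(t)-K_t(2-2s)& \tfrac12\le s\le 1.\end{cases}
\]
One checks (iii) holds automatically and $H_t(1)=\ga(t)$, so the whole problem reduces to constructing $K$ with $K_t\big((0,1]\big)$ and $\ga(t)-K_t\big([0,1)\big)$ both avoiding $\Om$, i.e.\ $K_t\big((0,1]\big)\cup\big(\ga(t)-K_t\big([0,1]\big)\big)\subset\C\setminus\Om$ for $t>0$, together with $K_t(1)=\tfrac12\ga(t)$ and $K_t(0)=0$.

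**Building $K$ via a vector field.** I would obtain $K$ by pushing the initial half-segment $s\mapsto \tfrac12 s\ga(0)$ forward under an isotopy $\psi_t$ of $\C$ (with $\psi_0=\mathrm{id}$) that drags the point $\tfrac12\ga(0)$ along the half-scaled path $\tfrac12\ga(t)$; that is, $K_t(s)\defeq\psi_t\big(\tfrac12 s\ga(0)\big)$. Here $\psi_t$ is the flow of a time-dependent vector field $X_t$ supported in a bounded region, constructed so that $X_t$ vanishes in a neighbourhood of each point of $\Om$ (this is possible since $\Om$ is closed and discrete, hence locally finite) and so that $X_t\big(\tfrac12\ga(t)\big)=\tfrac12\ga'(t)$ along the moving point. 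Because $X_t$ vanishes near $\Om$, each $\psi_t$ fixes a neighbourhood of $\Om$ pointwise, so any point staying away from $\Om$ is mapped away from $\Om$; since $\tfrac12 s\ga(0)\notin\Om$ for $s\in(0,1]$ (as $\ga(0)\in\D^*_\rho$ and $\tfrac12 s\ga(0)$ lies in $\D_\rho$, where the only possible point of $\Om$ is $0$, attained only at $s=0$), we get $K_t\big((0,1]\big)\subset\C\setminus\Om$. The symmetric half $\ga(t)-K_t([0,1])$ needs a parallel argument: I would also require the vector field to be compatible with the central symmetry $\ze\mapsto\ga(t)-\ze$ near the relevant points, or more simply verify directly that $\ga(t)-K_t(s)$ avoids $\Om$ by arranging the support of $X_t$ to also miss $\ga(t)-\Om$ near the image of $K_t$; since $\Om$ is stable under addition this kind of symmetry can be maintained, but this is exactly the spot where the addition-stability hypothesis must enter and must be used with care.

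**Where the hypothesis $\Om+\Om\subset\Om$ enters.** The symmetric half $s\mapsto\ga(t)-K_t(s)$ is a path from $\ga(t)$ to $\tfrac12\ga(t)$, and we need it to avoid $\Om$; equivalently $K_t(s)\notin\ga(t)-\Om$. At $t=0$ this path is $s\mapsto(1-\tfrac s2)\ga(0)$, which stays in $\D_\rho$ minus $0$ as long as $\ga(0)-\tfrac12 s\ga(0)\neq$ any element of $\Om$ — again fine. For general $t$ the subtlety is that the two halves $K_t$ and $\ga(t)-K_t$ together must dodge $\Om$, and when one transports $\xi$ along the first half and $\ga(t)-\xi$ along the ``mirror,'' the convolution integrand in Lemma~\ref{lemcontsymOm} encounters singularities precisely at $\xi\in\Om$ and $\ga(t)-\xi\in\Om$, i.e.\ at $\xi\in\Om\cup(\ga(t)-\Om)$. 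The obstruction set the homotopy must be generic to is therefore $\Om\cup(\ga(t)-\Om)$, and the additivity $\Om+\Om\subset\Om$ is what guarantees that, even though this set grows with $t$, it does so in a controlled way compatible with symmetric deformation — in particular when $\ga$ ends near a point of $\Om+\Om$ the mirror singularities pile onto $\Om$ itself rather than creating new bad points. I expect the \textbf{main obstacle} to be precisely this: constructing the vector field $X_t$ so that its flow simultaneously (a) drags $\tfrac12\ga(t)$ correctly, (b) is defined on all of $[0,1]$ in $t$ without blowing up, and (c) keeps \emph{both} halves of $H_t$ off $\Om$ for every $t$, using additivity to handle the moving obstruction $\ga(t)-\Om$. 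The regularity hypothesis on $\ga$ (non-vanishing one-sided derivatives) is there to make the ``drag'' well-defined and transverse; I would use it to choose $X_t$ proportional to $\ga'(t)$ near $\tfrac12\ga(t)$ with a bump-function cutoff, and then a compactness argument over $t\in I$ fixes a uniform safe distance to $\Om$.
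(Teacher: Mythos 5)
Your overall strategy---realizing $H_t$ as the image of the initial segment under an isotopy generated by a non-autonomous vector field that drags the endpoint along~$\ga$ while respecting~$\Om$---is exactly the paper's, and you correctly locate the crux: the half-path $K_t$ must avoid not only~$\Om$ but also the \emph{moving} obstruction $\ga(t)-\Om$, and the additivity of~$\Om$ must enter there. But you never construct a vector field achieving this, and the fix you sketch points in the wrong direction. If $X_t$ vanishes near a point $\ga(t)-\om$ (your suggestion of ``arranging the support of $X_t$ to also miss $\ga(t)-\Om$''), then nearby points of your half-path stand still while the obstruction $\ga(t)-\om$ itself moves with velocity $\ga'(t)$ and sweeps into them: vanishing of~$X$ protects only against the \emph{static} obstruction~$\Om$. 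What is needed on $\ga(t)-\Om$ is the opposite boundary condition $X(\ze,t)=\ga'(t)$, so that each curve $t\mapsto\ga(t)-\om$ is itself an integral curve and Cauchy--Lipschitz uniqueness forbids any other solution from meeting it. The paper's field
\[
X(\ze,t)=\frac{\eta(\ze)}{\eta(\ze)+\eta\big(\ga(t)-\ze\big)}\,\ga'(t),
\]
with $\eta\ge 0$ of class $C^1$ vanishing exactly on~$\Om$ (resp.\ on $\{0\}\cup\ov\Om_{\de'}$), interpolates between these two boundary conditions, and its denominator is positive precisely because $\Om+\Om\subset\Om$ and $\ga$ avoids~$\Om$---that is the one place additivity is used. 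Moreover this~$X$ satisfies $X(\ga(t)-\ze,t)=\ga'(t)-X(\ze,t)$, so the flow automatically carries centrally symmetric paths to centrally symmetric paths; your half-path surgery, while legitimate in principle, then becomes unnecessary, and without such a symmetric field it does not by itself produce the required~$K$.

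A second gap: when $0\notin\Om$ the hypotheses allow $\ga$ to pass through~$0$, and since the flow must fix~$0$ (to preserve $H_t(0)=0$) the vector field above degenerates at the times~$t$ with $\ga(t)=0$ (the denominator $\eta(0)+\eta(\ga(t))$ vanishes there). The paper handles this with a separate patching argument: the zeros of~$\ga$ are finitely many thanks to the non-vanishing of $\ga'_\pm$ (this is where that hypothesis is really used, not for ``transversality'' of the drag), the short subpaths near~$0$ are treated by the elementary affine homotopy $H(s,t)=h(s)+s\big(\ga(t)-\ga(a)\big)$ of Lemma~\ref{sublemextra}, and these pieces are glued to quantitative versions of the flow construction (Lemma~\ref{sublemkey}) that track the loss of distance to~$\Om$ at each step. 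Your proposal is silent on this case.
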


The proof is strikingly simple when $\ga$ does not pass through~$0$, which is
automatic if we assume $0\in\Om$.
The general case requires an extra work which is technical and involves a
quantitative version of the simpler case.
With a view to helping the reader to grasp the mechanism of the proof, we thus begin
with the case when $0\in\Om$.


\subsection{Proof of the key lemma when $0\in\Om$}	\label{seczeroinOm}


Assume that $\ga$ is given as in the hypothesis of Lemma~\ref{lemkey}.
We are looking for a symmetric $\Om$-homotopy whose initial path is imposed: it
must be
\[
s \in I \mapsto H_0(s) \defeq s\ga(0),
\]
which satisfies the three requirements of Definition~\ref{defSymOmHom} at $t=0$:
\begin{enumerate}[(i)]
\item
$H_0(0)=0$, 
\item
$H_0\big( (0,1] \big) \subset \C\setminus \Om$, 
\item
$H_0(1) - H_0(s) = H_0(1-s)$ for every $s \in I$.
\end{enumerate}
The idea is to define a family of maps $(\Psi_t)_{t\in[0,1]}$ so that
\beglabel{eqdefHsPsis}
H_t(s) \defeq \Psi_t\big( H_0(s) \big), \qquad s \in I,
\elabel
yield the desired homotopy. For that, it is sufficient that $(t,\ze) \in[0,1]
\times \C \mapsto \Psi_t(\ze)$ be continuously differentiable (for the structure
of real two-dimensional vector space of~$\C$), $\Psi_0=\ID$ and, for each $t\in
[0,1]$,
\begin{enumerate}[(i')]
\item
$\Psi_t(0) = 0$,
\item
$\Psi_t(\C\setminus\Om) \subset \C\setminus\Om$,
\item
$\Psi_t\big( \ga(0) - \ze \big) = \Psi_t\big( \ga(0) \big) - \Psi_t(\ze)$ for
all $\ze\in\C$,
\item
$\Psi_t\big(\ga(0)\big) = \ga(t)$.
\end{enumerate}
In fact, the properties (i')--(iv') ensure that any initial path $H_0$
satisfying (i)--(iii) and ending at~$\ga(0)$ produces through~\eqref{eqdefHsPsis}
a symmetric $\Om$-homotopy whose endpoint path is~$\ga$. Consequently, we may
assume without loss of generality that~$\ga$ is~$C^1$ on $[0,1]$ 
(then, if $\ga$ is only piecewise~$C^1$, we just need to concatenate the
symmetric $\Om$-homotopies associated with the various pieces).

The maps~$\Psi_t$ will be generated by the flow of a non-autonomous vector
field $X(\ze,t)$ associated with~$\ga$ that we now define.
We view $(\C,|\,\cdot\,|)$ as a real $2$-dimensional Banach space and pick%
\footnote{
For instance pick a $C^1$ function $\ph_0 \col \R\to[0,1]$ such that 
$\{\, x\in\R \mid \ph_0(x) = 1 \,\} = \{0\}$ and $\ph_0(x)=0$ for $|x|\ge1$,
and a bijection $\om\col\N\to\Om$;
then set $\de_k \defeq \dist\big( \om(k), \Om\setminus\{\om(k)\} \big) >0$ and
$\sig(\ze) \defeq \sum_k \ph_0\big( \frac{4|\ze-\om(k)|^2}{\de_k^2} \big)$:
for each $\ze\in\C$ there is at most one non-zero term in this series 
(because $k\neq\ell$, $|\ze-\om(k)| < \de_k/2$ and $|\ze-\om(\ell)| <
\de_\ell/2$ would imply $|\om(k)-\om(\ell)|< (\de_k+\de_\ell)/2$, which would
contradict $|\om(k)-\om(\ell)| \ge \de_k$ and~$\de_\ell$),
thus $\sig$ is $C^1$, takes its values in $[0,1]$ and satisfies
$\{\, \ze\in\C \mid \sig(\ze) = 1 \,\} = \Om$, therefore $\eta\defeq 1-\sig$
will do.
Other solution: adapt the proof of Lemma~\ref{lemetatech}.
}
a $C^1$ function $\eta \col \C \to [0,1]$ such that
\[
\{\,\ze\in\C \mid \eta(\ze) = 0 \,\} = \Om.
\]
Observe that $D(\ze,t) \defeq \eta(\ze) + \eta\big( \ga(t)-\ze \big)$ defines
a $C^1$ function of $(\ze,t)$
which satisfies
\[
D(\ze,t) > 0 \quad \text{for all $\ze\in\C$ and $t\in[0,1]$}
\]
\emph{because $\Om$ is stable under addition};
indeed, $D(\ze,t) = 0$ would imply $\ze\in\Om$ and $\ga(t)-\ze\in\Om$, hence
$\ga(t)\in\Om$, which would contradict our assumptions.
Therefore, the formula
\begin{equation}	\label{eqdefXnonaut}
X(\ze,t) \defeq \frac{\eta(\ze)}{\eta(\ze) + \eta\big( \ga(t)-\ze \big)} \ga'(t)
\end{equation}
defines a non-autonomous vector field, which is continuous in $(\ze,t)$ on
$\C\times[0,1]$, $C^1$ in~$\ze$ and has its partial derivatives continuous in
$(\ze,t)$.
The Cauchy-Lipschitz theorem on the existence and uniqueness of solutions to
differential equations applies to $\frac{\dd\ze}{\dd t} = X(\ze,t)$:
for every $\ze\in\C$ and $t_0\in[0,1]$ there is a unique solution $t\mapsto \Phi^{t_0,t}(\ze)$
such that $\Phi^{t_0,t_0}(\ze)=\ze$. 
The fact that the vector field~$X$ is bounded implies that $\Phi^{t_0,t}(\ze)$ is
defined for all $t\in[0,1]$
and the classical theory guarantees that $(t_0,t,\ze)\mapsto\Phi^{t_0,t}(\ze)$
is $C^1$ on $[0,1]\times[0,1]\times\C$.

Let us set $\Psi_t \defeq \Phi^{0,t}$ for $t\in[0,1]$ and check that this family of
maps satisfies (i')--(iv').
We have
\begin{gather}
\label{eqvanishX}
X(\om,t) = 0 \quad\text{for all $\om\in\Om$,}\\
\label{eqsymX}
X\big( \ga(t)-\ze, t \big) = \ga'(t) - X(\ze,t)
\quad\text{for all $\ze\in\C$}
\end{gather}
for all $t\in[0,1]$
(by the very definition of~$X$). Therefore
\begin{itemize}
\item
(i') and (ii') follow from~\eqref{eqvanishX} which yields
$\Phi^{t_0,t}(\om)=\om$ for every $t_0$ and~$t$, whence $\Psi_t(0)=0$ since
$0\in\Om$,
and from the non-autonomous flow property $\Phi^{t,0}\circ\Phi^{0,t}=\ID$
(hence $\Psi_t(\ze)=\om$ implies $\ze=\Phi^{t,0}(\om)=\om$);
\item
(iv') follows from the fact that $X\big( \ga(t),t \big)=\ga'(t)$,
by~\eqref{eqvanishX} and~\eqref{eqsymX} with $\ze=0$, using again that
$0\in\Om$, hence $t\mapsto \ga(t)$ is a solution of~$X$;
\item
(iii') follows from~\eqref{eqsymX}: for any solution $t\mapsto\ze(t)$, the curve
$t\mapsto \xi(t)\defeq \ga(t)-\ze(t)$ satisfies $\xi(0) = \ga(0)-\ze(0)$ and
$\xi'(t) = \ga'(t) - X\big( \ze(t),t \big) = X\big( \xi(t),t \big)$,
hence it is a solution:
$\xi(t) = \Psi_t\big(\ga(0)-\ze(0)\big)$.
\end{itemize}

As explained above, formula~\eqref{eqdefHsPsis} thus produces the desired
symmetric $\Om$-homotopy.

\begin{rem}
Our proof of Lemma~\ref{lemkey}, which essentially relies on the use of the flow
of the non-autonomous vector field~\eqref{eqdefXnonaut}, arose as an attempt to
understand a related but more complicated construction which can be found in an
appendix of the book \cite{CNP}
(however the vector field there was autonomous and we must confess that we were
not able to follow completely the arguments of \cite{CNP}).
\end{rem}


\subsection{Proof of the key lemma when $0\notin\Om$}


From now on, we suppose $0\notin\Om$ and we use the notation
\[
\Om_\eps \defeq \{\, \ze \in \C \mid \dist(\ze,\Om) < \eps \,\}
\]
for any $\eps>0$,
hence $\ov\Om_{\eps}= \{\, \ze \in \C \mid \dist(\ze,\Om) \le \eps \,\}$.
We shall require the following technical


\begin{lemma}	\label{lemetatech}
For any $\eps>0$ there exists a $C^1$ function $\eta \col \C \to [0,1]$ such
that
\[
\{\, \ze\in\C \mid \eta(\ze) = 0 \,\} = \{0\} \cup \ov\Om_{\eps}.
\]
\end{lemma}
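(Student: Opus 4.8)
The plan is to construct $\eta$ as a product of two factors, one ``blocking out'' the origin and one blocking out the thickened set $\ov\Om_\eps$, so that the zero set of the product is exactly $\{0\}\cup\ov\Om_\eps$. For the origin, the function $\ze\mapsto\min\{|\ze|^2,1\}$ is a candidate but is not $C^1$; instead I would take a fixed $C^1$ function $g\col[0,+\infty)\to[0,1]$ with $g(0)=0$, $g(r)>0$ for $r>0$, and $g(r)=1$ for $r\ge\rho/2$ (say), and set $g_0(\ze)\defeq g(|\ze|)$, which is $C^1$ on $\C$ since $g$ vanishes to second order at $0$ can be arranged, or more simply since $g$ is constant near $r=0$ is \emph{not} allowed (we need $g(0)=0$ only at $0$); the clean way is to pick $g$ smooth with $g'(0)=0$ and $g(r)=|\ze|^2$-like behaviour near $0$, e.g. $g(r)=\min\{r^2,1\}$ mollified, giving $g_0\in C^1(\C)$ with zero set $\{0\}$.

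For the thickened set $\ov\Om_\eps$, the natural first attempt is $\ze\mapsto\max\{\dist(\ze,\Om)-\eps,0\}$, but this is only Lipschitz, not $C^1$, across the boundary $\pa\Om_\eps$ and also where $\dist(\cdot,\Om)$ itself fails to be differentiable (the ``medial axis''). The fix is the partition-of-unity / local construction already sketched in the footnote on page~\pageref{seczeroinOm}: since $\Om$ is closed and discrete, write $\Om=\{\om_k\}$, let $\de_k\defeq\dist(\om_k,\Om\setminus\{\om_k\})>0$, shrink $\eps$ if necessary so that $\eps<\de_k/4$ for the relevant indices (this is where closedness/discreteness is used: only finitely many $\om_k$ meet any bounded region, and we may even allow $\eps$ to depend on $k$, replacing the hypothesis ``$\eps>0$'' by a locally finite family, but a single $\eps$ works after noting $\de_k$ is bounded below on compacta—actually $\de_k$ need not be bounded below globally, so the honest statement requires $\eps$ small \emph{relative to each} $\de_k$; re-reading the lemma, $\eps$ is given first, so instead I would \emph{not} shrink $\eps$ but rather use the true distance function tempered by a fixed smooth profile). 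Concretely: let $\ph\col[0,+\infty)\to[0,1]$ be $C^1$ with $\ph(r)=0$ for $r\le\eps$, $0<\ph(r)<1$ for $\eps<r<2\eps$, $\ph(r)=1$ for $r\ge 2\eps$, and $\ph'\equiv 0$ near $r=\eps$ and near $r=2\eps$; then on the open set where $\dist(\cdot,\Om)<2\eps$—which, for each point, is governed by the \emph{nearest single} $\om_k$ once $\eps$ is small compared to $\de_k$—the function $\ph(\dist(\ze,\Om))$ agrees with $\ph(|\ze-\om_k|)$, which is $C^1$; and where $\dist(\cdot,\Om)\ge 2\eps$ it is the constant $1$. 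The overlap region $\dist(\cdot,\Om)\in[\eps,2\eps]$ is still problematic near the medial axis, so the truly safe route is the footnote's: set $g_\Om(\ze)\defeq\sum_k\big(1-\ph_0(\tfrac{|\ze-\om_k|^2}{\eps^2})\big)$-type expression is wrong too; rather $\eta_\Om(\ze)\defeq 1-\sum_k\psi_k(\ze)$ where each $\psi_k$ is a $C^1$ bump supported in $D(\om_k,\de_k/2)$, equal to $1$ on $\ov{D(\om_k,\eps)}$ (legitimate provided $\eps<\de_k/2$), taking values in $[0,1]$, and with disjoint supports so the sum is locally a single term; then $\{\eta_\Om=0\}=\bigcup_k\ov{D(\om_k,\eps)}=\ov\Om_\eps$. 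This does require $\eps<\de_k/2$ for all $k$, i.e. $\eps<\tfrac12\inf_k\de_k$; if that infimum is $0$ (accumulation of ``gaps'') one instead uses a $k$-dependent inner radius $r_k\defeq\min\{\eps,\de_k/4\}$ and defines $\eta_\Om$ so that $\{\eta_\Om=0\}=\bigcup_k\ov{D(\om_k,r_k)}$, which is contained in $\Om_\eps$ but may be smaller—so to hit $\ov\Om_\eps$ exactly one must be more careful. I expect this to be the main obstacle: reconciling ``$\eps$ is prescribed, possibly larger than some $\de_k/2$'' with ``the zero set is \emph{exactly} $\ov\Om_\eps$'' and $C^1$ regularity.

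The resolution I would adopt is to give up on disjoint bumps and instead prove directly that $\dist(\cdot,\ov\Om_\eps)$ can be replaced by a $C^1$ surrogate with the same zero set: by a standard smoothing result (Whitney / Fefferman-type, or just mollification of $\max\{\dist(\ze,\Om)-\eps,0\}^2$, the square restoring $C^1$-ness at the boundary and a suitable mollification handling the interior medial-axis creases while \emph{preserving the zero set} because the function is identically $0$ on a neighbourhood-interior of $\ov\Om_\eps$), one obtains $\eta_\Om\in C^1(\C,[0,1])$ with $\{\eta_\Om=0\}=\ov\Om_\eps$; the key point making mollification preserve the zero set is that $\max\{\dist(\ze,\Om)-\eps,0\}^2$ vanishes on the \emph{open} set $\Om_\eps$ together with its boundary, and one mollifies only ``outward'' (or uses that the function is already $C^1$ in a neighbourhood of $\ov\Om_\eps$ since it's locally $\equiv 0$ there, so no mollification is needed there—mollify only on the region $\dist(\cdot,\Om)\ge\eps+\tau$ for small $\tau$ and glue with a $C^1$ partition of unity subordinate to $\{\dist<\eps+2\tau\}$ and $\{\dist>\eps+\tau\}$). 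Then I would set
\[
\eta(\ze)\defeq g_0(\ze)\,\eta_\Om(\ze),
\]
which is $C^1$, takes values in $[0,1]$, and has zero set $\{g_0=0\}\cup\{\eta_\Om=0\}=\{0\}\cup\ov\Om_\eps$, as required. The footnote already indicates (``adapt the proof of Lemma~\ref{lemetatech}'') that the authors have in mind essentially this kind of elementary but slightly fiddly construction, so I would keep the write-up short, emphasising the two-factor structure and citing the smoothing of $\dist(\cdot,\Om)$ near its $\eps$-level set, and relegate the routine verification that the product is $C^1$ with values in $[0,1]$ to the reader.
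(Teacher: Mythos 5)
Your two-factor strategy ($\eta=g_0\cdot\eta_\Om$, one factor killing the origin and one killing $\ov\Om_\eps$) matches the paper's, and you correctly identify the real difficulty: $\eps$ is prescribed in advance, so the discs $\ov{D(\om,\eps)}$, $\om\in\Om$, may overlap, which rules out any construction based on disjointly supported bumps. But the resolution you adopt does not close. The function $f(\ze)=\max\{\dist(\ze,\Om)-\eps,0\}^2$ is \emph{not} $C^1$ on any neighbourhood of $\ov\Om_\eps$ in general: it vanishes identically on $\ov\Om_\eps$ itself, but the medial axis of $\Om$ (where the gradient of $\dist(\cdot,\Om)$ jumps) can come arbitrarily close to $\pa\Om_\eps$ --- take two points of $\Om$ at distance slightly more than $2\eps$ --- and at a medial-axis point with $\dist>\eps$ the squaring does nothing to remove the crease. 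Consequently your gluing fails: the partition-of-unity term supported in $\{\dist<\eps+2\tau\}$ multiplies the \emph{unmollified} $f$ precisely on a region that still contains creases, while a genuine mollification near $\pa\Om_\eps$ would strictly shrink the zero set (the mollification of a nonnegative function is positive wherever the function is positive on a set of positive measure inside the kernel's support). A bare appeal to Whitney's theorem (every closed subset of $\C$ is the zero set of a $C^\infty$ function with values in $[0,1]$, applied directly to $\{0\}\cup\ov\Om_\eps$) would of course settle the lemma, but that is not the elementary argument you actually write out.

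The missing elementary idea is to use a locally finite \emph{product} rather than a sum of bumps. The paper takes a single $C^1$ profile $\chi\col\R\to[0,1]$ with $\chi\ii(0)=[-\eps^2,\eps^2]$ and $\chi\equiv1$ on $\{|x|\ge(1+\eps)^2\}$, sets $\eta_k(\ze)=\chi\big(|\ze-\om(k)|^2\big)$ --- whose zero set is exactly $\ov{D(\om(k),\eps)}$ and which is identically $1$ off $D(\om(k),1+\eps)$ --- and defines $\eta_*=\prod_k\eta_k$. Since $\Om$ is closed and discrete, on any bounded open set all but finitely many factors are $\equiv1$, so $\eta_*$ is $C^1$ with values in $[0,1]$, and its zero set is the union $\bigcup_k\ov{D(\om(k),\eps)}=\ov\Om_\eps$ \emph{whether or not the discs overlap}. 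Multiplying by $\chi_0(|\ze|^2)$ when $0\notin\ov\Om_\eps$ finishes the proof. This product mechanism is exactly the overlap-proof device your sum-of-bumps attempt was missing.
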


\begin{proof}
Pick a $C^1$ function $\chi \col \R \to [0,1]$ such that
$\{\, x\in\R \mid \chi(x) = 0 \,\} = [-\eps^2,\eps^2]$ and $\chi(x)=1$ for
$|x|\ge (1+\eps)^2$,
and a bijection $\om\col\N^*\to\Om$.
For each $k\in\N^*$, 
$
\eta_k(\ze) \defeq \chi\big( |\ze-\om(k)|^2 \big)
$
defines a $C^1$ function on~$\C$ such that $\eta_k\ii(0) = \ov{D(\om(k),\eps)}$ and
$\eta_k\equiv 1$ on $\C\setminus D(\om(k),1+\eps)$.
Consider the infinite product
\beglabel{eqdefetaprod}
\eta_*(\ze) \defeq \prod_{k\in\N^*} \eta_k(\ze).
\elabel
For any bounded open subset~$U$ of~$\C$, the set $\cF_U \defeq \{\, k\in\N^* \mid
U\cap D(\om(k),1+\eps) \neq\emptyset \,\}$ is finite (because $\Om$ is discrete), thus almost all the
factors in~\eqref{eqdefetaprod} are equal to~$1$ when $\ze\in U$:
$(\eta_*)_{|U} = \prod_{k\in\cF_U} (\eta_k)_{|U}$, hence $\eta_*$ is~$C^1$, takes its values
in~$[0,1]$ and 
\[
\eta_*\ii(0)\cap U = \bigcup_{k\in\cF_U} \ov{D(\om(k),\eps)} \cap U,
\]
whence it follows that $\eta_*\ii(0) = \ov\Om_{\eps}$.

If $0\in\ov\Om_\eps$, then one can take $\eta=\eta_*$. If not, then one can take
the product $\eta = \eta_0 \eta_*$ with 
$\eta_0(\ze) \defeq \chi_0(|\ze|^2)$,
where $\chi_0$ is any $C^1$ function on~$\R$ which takes its values in $[0,1]$
and such that $\chi_0\ii(0)=\{0\}$.
\end{proof}


We  now repeat the work of the previous section replacing $\Om$ with
$\{0\}\cup\Om$, adding quantitative information (we still assume that we are
given a path which does not pass through~$0$ but we want to control the way the
corresponding symmetric $\Om$-homotopy approaches the points of~$\Om$)
and authorizing a more general initial path than a rectilinear one.


\begin{lemma}	\label{sublemkey}
Let $\de,\de'>0$ with $\de' < \de/2$.
Suppose that $J = [a,b]$ is a compact interval of~$\R$ and $\ga \col J \to \C$ is a
path such that
\[ 
0 \notin \ga(J)
\quad\text{and}\quad
\ga(J) \subset \C\setminus \Om_\de.
\]
Suppose that $h \col I \to \C$ is a $C^1$ path such that
\begin{enumerate}[(i)]
\item $h(0) = 0$,
\item $h(I) \subset \C\setminus \Om_{\de'}$,
\item
$h(1-s) = h(1) - h(s)$ for all $s\in I$,
\item $h(1) = \ga(a)$.
\end{enumerate}
Then there exists a symmetric $\Om$-homotopy~$H$ defined on $I \times J$, whose
initial path is~$h$, whose endpoint path is~$\ga$, which satisfies
$H(I\times J) \subset \C\setminus{\Om_{\de'}}$
and whose final path is~$C^1$.
\end{lemma}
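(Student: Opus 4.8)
The plan is to transport the given initial path~$h$ along~$\ga$ by the flow of a suitable non-autonomous vector field, in the spirit of Section~\ref{seczeroinOm} but now forbidding $\{0\}\cup\Om$ and keeping quantitative control of distances to~$\Om$; the gap condition $\de'<\de/2$ together with the additive stability of~$\Om$ is exactly what makes this work. First I reduce to the case $\ga\in C^1(J)$: pick $a=a_0<a_1<\dots<a_n=b$ with each $\ga_{|[a_{j-1},a_j]}$ of class~$C^1$; it then suffices to build the homotopy over each slab $I\times[a_{j-1},a_j]$ and concatenate in the $t$-variable, since the final path of the $j$-th homotopy is $C^1$ and again satisfies the four conditions required of~$h$ (conditions (i)--(iii) because it is a path of a symmetric $\Om$-homotopy with $H(I\times J)\subset\C\setminus\Om_{\de'}$, and (iv) because the endpoint path of that homotopy is~$\ga_{|[a_{j-1},a_j]}$), hence may serve as the initial path of the $(j+1)$-st one.

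Apply Lemma~\ref{lemetatech} with $\eps=\de'$ to obtain a $C^1$ function $\eta\col\C\to[0,1]$ with $\eta\ii(0)=\{0\}\cup\ov\Om_{\de'}$, and set
\[
D(\ze,t)\defeq\eta(\ze)+\eta\big(\ga(t)-\ze\big),\qquad
X(\ze,t)\defeq\frac{\eta(\ze)}{D(\ze,t)}\,\ga'(t),\qquad(\ze,t)\in\C\times J.
\]
The crucial point is that $D>0$ everywhere. If $D(\ze,t)=0$, then $\ze$ and $\ga(t)-\ze$ both lie in $\{0\}\cup\ov\Om_{\de'}$, and we distinguish three cases. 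If both lie in $\ov\Om_{\de'}$, choose $\om_1,\om_2\in\Om$ with $|\ze-\om_1|\le\de'$ and $|\ga(t)-\ze-\om_2|\le\de'$; since $\om_1+\om_2\in\Om$, this gives $\dist\big(\ga(t),\Om\big)\le|\ga(t)-(\om_1+\om_2)|\le2\de'<\de$, contradicting $\ga(J)\subset\C\setminus\Om_\de$. If $\ze=0$, then $\ga(t)=\ga(t)-\ze\in\{0\}\cup\ov\Om_{\de'}$, which is impossible because $0\notin\ga(J)$ and $\dist\big(\ga(t),\Om\big)\ge\de>\de'$; the case $\ga(t)-\ze=0$ is symmetric. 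Since $\eta\le D$, we also have $|X|\le\sup_J|\ga'|<\infty$, so $X$ is a bounded vector field on $\C\times J$, continuous in $(\ze,t)$, $C^1$ in~$\ze$, with partial derivatives continuous in $(\ze,t)$. By the Cauchy--Lipschitz theorem with parameters, $X$ generates a flow $(t_0,t,\ze)\mapsto\Phi^{t_0,t}(\ze)$, of class~$C^1$ on $J\times J\times\C$, with $\Phi^{t_0,t_0}=\ID$; set $\Psi_t\defeq\Phi^{a,t}$ and $H(s,t)\defeq\Psi_t\big(h(s)\big)$ on $I\times J$.

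By construction $X(\cdot,t)$ vanishes on $\{0\}\cup\ov\Om_{\de'}$, and a direct computation gives $X\big(\ga(t)-\ze,t\big)=\ga'(t)-X(\ze,t)$ for all~$\ze$; taking $\ze=0$ yields $X\big(\ga(t),t\big)=\ga'(t)$, so $t\mapsto\ga(t)$ is the $X$-trajectory through $\ga(a)=h(1)$ and hence $\Psi_t\big(h(1)\big)=\ga(t)$. Consequently $H$ is continuous (indeed $C^1$), each $H_t$ is a path, $H_a=h$, $H_t(0)=\Psi_t(0)=0$ (as $0$ is a fixed point of the flow), and $H_t(1)=\ga(t)$, so $\Ga_H=\ga$ and the final path $H_b=\Phi^{a,b}\circ h$ is $C^1$. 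For the symmetry condition of Definition~\ref{defSymOmHom}, fix $\ze_0\in\C$: the curve $\xi(t)\defeq\ga(t)-\Psi_t(\ze_0)$ satisfies $\xi(a)=\ga(a)-\ze_0$ and $\dot\xi(t)=\ga'(t)-X\big(\Psi_t(\ze_0),t\big)=X\big(\xi(t),t\big)$, hence $\xi(t)=\Psi_t\big(\ga(a)-\ze_0\big)$; taking $\ze_0=h(s)$ and using $h(1-s)=h(1)-h(s)=\ga(a)-h(s)$ gives $H_t(1)-H_t(s)=\ga(t)-\Psi_t(h(s))=\Psi_t(h(1-s))=H_t(1-s)$. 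Finally, since $X(\cdot,t)$ vanishes on $\ov\Om_{\de'}$, every point of $\ov\Om_{\de'}$ is fixed by the flow, so $\C\setminus\ov\Om_{\de'}$ is invariant; as $\{\,\ze\mid\dist(\ze,\Om)=\de'\,\}\subset\ov\Om_{\de'}$ also consists of fixed points, the set $\C\setminus\Om_{\de'}=\{\,\ze\mid\dist(\ze,\Om)\ge\de'\,\}$ is invariant as well, whence $H(I\times J)\subset\C\setminus\Om_{\de'}$ from $h(I)\subset\C\setminus\Om_{\de'}$. Thus $H$ is the desired symmetric $\Om$-homotopy.

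The main obstacle is the positivity of~$D$: it is the one place where all the hypotheses enter simultaneously — the path avoiding $0$ and~$\Om_\de$, the gap $\de'<\de/2$, and the additive stability of~$\Om$ — and it is what guarantees that~$X$, hence the flow, is globally defined on all of~$\C$. The invariance of $\C\setminus\Om_{\de'}$ under the flow is a secondary point, dispatched by a uniqueness argument (no trajectory can cross the set of fixed points $\ov\Om_{\de'}$) once~$X$ is known to vanish there.
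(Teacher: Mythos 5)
Your proof is correct and follows essentially the same route as the paper's: reduction to $C^1$ pieces, the cut-off function from Lemma~\ref{lemetatech}, the non-autonomous vector field $X=\frac{\eta(\ze)}{D(\ze,t)}\ga'(t)$ whose flow transports~$h$ along~$\ga$, with the positivity of~$D$ resting on the additive stability of~$\Om$ together with $2\de'<\de$ and $0\notin\ga(J)$. The paper merely says to "conclude exactly as in Section~\ref{seczeroinOm}" at the point where you spell out the symmetry and invariance arguments, and your added details (in particular the uniqueness argument for the invariance of $\C\setminus\Om_{\de'}$) are accurate.
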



\begin{proof}
We may assume without loss of generality that~$\ga$ is~$C^1$ on~$J$
(if $\ga$ is only piecewise~$C^1$, we just need to concatenate the
symmetric $\Om$-homotopies associated with the various pieces).
We shall define a family of maps $(\Psi_t)_{t\in J}$ so that
\beglabel{eqdefHtPsit}
H_t(s) \defeq \Psi_t\big( h(s) \big), \qquad s \in I,
\elabel
yield the desired homotopy. For that, it is sufficient that $(t,\ze) \in J
\times \C \mapsto \Psi_t(\ze)$ be continuously differentiable, $\Psi_0=\ID$ and,
for each $t\in J$,
\begin{enumerate}[(i')]
\item
$\Psi_t(0) = 0$,
\item
$\Psi_t(\C\setminus\Om_{\de'}) \subset \C\setminus\Om_{\de'}$,
\item
$\Psi_t\big( \ga(a) - \ze \big) = \Psi_t\big( \ga(a) \big) - \Psi_t(\ze)$ for
all $\ze\in\C$,
\item
$\Psi_t\big(\ga(a)\big) = \ga(t)$.
\end{enumerate}
As in Section~\ref{seczeroinOm}, our maps~$\Psi_t$ will be generated by a
non-autonomous vector field. 

Lemma~\ref{lemetatech} allows us to choose a $C^1$ function $\eta \col \C \to [0,1]$ such that
\[
\{\,\ze\in\C \mid \eta(\ze) = 0 \,\} = \{0\} \cup \ov\Om_{\de'}.
\]
We observe that $D(\ze,t) \defeq \eta(\ze) + \eta\big( \ga(t)-\ze \big)$ defines
a $C^1$ function of $(\ze,t)$
which satisfies
\[
D(\ze,t) > 0 \quad \text{for all $\ze\in\C$ and $t\in[0,1]$}
\]
because $\Om$ is stable under addition;
indeed, $D(\ze,t) = 0$ would imply that both $\ze$ and $\ga(t)-\ze$ lie in
$\{0\} \cup \ov\Om_{\de'}$, hence $\ga(t)\in \{0\} \cup \ov\Om_{2\de'}$, which would
contradict our assumption
$\ga(J) \subset \C \setminus \big( \{0\} \cup \Om_\de \big)$.
Therefore the formula
\[
X(\ze,t) \defeq \frac{\eta(\ze)}{\eta(\ze) + \eta\big( \ga(t)-\ze \big)}
\ga'(t),
\qquad (\ze,t)\in\C\times J,
\]
defines a non-autonomous vector field whose flow $(\Phi^{t_0,t})_{t_0,t\in J}$
allows one to conclude the proof exactly as in Section~\ref{seczeroinOm},
setting $\Psi_t \defeq \Phi^{a,t}$ and
replacing~\eqref{eqvanishX} with
\[
X(\om,t) = 0 \quad\text{for all $\om\in\{0\}\cup \Om_{\de'}$.}
\]
\end{proof}


We now consider the case of a path~$\ga$ which entirely lies close to~$0$.

\begin{lemma}	\label{sublemextra}
Let $\eps,\de'>0$ with $0 < \eps < \de'$.
Suppose that $K = [a,b]$ is a compact interval of~$\R$ and $\ga \col K \to \C$ is a
path such that
\[ 
\ga(K) \subset \ov\D_{\eps/2}.
\]
Suppose that $h \col I \to \C$ is a $C^1$ path such that
\begin{enumerate}[(i)]
\item $h(0) = 0$,
\item $h(I) \subset \C\setminus \Om_{\de'}$,
\item
$h(1-s) = h(1) - h(s)$ for all $s\in I$,
\item $h(1) = \ga(a)$.
\end{enumerate}
Then there exists a symmetric $\Om$-homotopy~$H$ defined on $I \times K$, whose
initial path is~$h$, whose endpoint path is~$\ga$, which satisfies
\[
H(I\times K) \subset \C\setminus{\Om_{\de''}}
\quad\text{ with $\de'' \defeq \de'-\eps$ }
\]
and whose final path is~$C^1$.
\end{lemma}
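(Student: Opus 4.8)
The plan is to write down an explicit symmetric $\Om$-homotopy, obtained from the prescribed initial path~$h$ by adding a corrective term that is linear in~$s$ and carries the endpoint from $\ga(a)$ to $\ga(t)$. Precisely, I would set
\[
H_t(s) \defeq h(s) + s\big(\ga(t) - \ga(a)\big), \qquad (s,t)\in I\times K,
\]
and check that this does the job.

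First I would verify the three conditions of Definition~\ref{defSymOmHom} for each fixed~$t$. Condition~(i) is immediate from $h(0)=0$. Condition~(iii) reduces, after inserting $h(1)=\ga(a)$ and $h(1-s)=h(1)-h(s)$, to the trivial identity $s(\ga(t)-\ga(a))+(1-s)(\ga(t)-\ga(a))=\ga(t)-\ga(a)$, which gives $H_t(1)-H_t(s)=\ga(t)-h(s)-s(\ga(t)-\ga(a))=h(1-s)+(1-s)(\ga(t)-\ga(a))=H_t(1-s)$. Condition~(ii) will come from the quantitative estimate below. Alongside this I would record the easy structural facts: $H$ is jointly continuous because $h$ and~$\ga$ are; each~$H_t$ is $C^1$ in~$s$, hence a path; $H_a=h$ is the initial path; $H_t(1)=\ga(a)+(\ga(t)-\ga(a))=\ga(t)$, so the endpoint path is~$\ga$; and $H_b(s)=h(s)+s(\ga(b)-\ga(a))$ is $C^1$ since $\ga(b)-\ga(a)$ is a constant vector — so here, in contrast with Lemma~\ref{sublemkey}, there is no need to first reduce to the case where~$\ga$ is~$C^1$.

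The only genuine computation is the inclusion $H(I\times K)\subset\C\setminus\Om_{\de''}$, which also supplies condition~(ii). Since $\ga(t),\ga(a)\in\ov\D_{\eps/2}$, we have $|\ga(t)-\ga(a)|\le\eps$, hence $|H_t(s)-h(s)|=s\,|\ga(t)-\ga(a)|\le\eps$ for every $(s,t)\in I\times K$. Hypothesis~(ii) on~$h$ says $\dist(h(s),\Om)\ge\de'$, so the triangle inequality yields $\dist(H_t(s),\Om)\ge\de'-\eps=\de''>0$; in particular $H_t(s)\notin\Om_{\de''}\supset\Om$ for all $s$, and $0\notin\Om$. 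This completes the construction.

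I do not expect a serious obstacle: the whole content is the choice of the corrective term $f_t(s)=s(\ga(t)-\ga(a))$, which is essentially forced by the three requirements that it vanish at $s=0$, equal $\ga(t)-\ga(a)$ at $s=1$, and satisfy $f_t(s)+f_t(1-s)=\ga(t)-\ga(a)$; the linear choice additionally keeps the perturbation of~$h$ bounded by~$\eps$ uniformly, and the prescribed loss $\de'\mapsto\de''=\de'-\eps$ in the statement is exactly tailored to absorb it.
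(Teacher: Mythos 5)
Your construction is exactly the one in the paper: $H(s,t)=h(s)+s\big(\ga(t)-\ga(a)\big)$, with the same verification of the symmetry condition and the same estimate $\dist\big(H_t(s),\Om\big)\ge\dist\big(h(s),\Om\big)-|\ga(t)-\ga(a)|\ge\de'-\eps$. The proof is correct and essentially identical to the paper's, with a bit more detail spelled out.
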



\begin{proof}
Define $H(s,t) \defeq h(s) + s\big( \ga(t) - \ga(a) \big)$.
This way $H(s,a) = h(s)$, $H(1,t) = \ga(t)$ and $H$ is a symmetric
$\Om$-homotopy as required:
$H(0,t) = 0$,
$H(s,t) + H(1-s,t) = h(s) + h(1-s) + \ga(t) - \ga(a) = \ga(t)$,
$\dist\big( H(s,t),\Om \big) \ge \dist\big( h(s), \Om) - |\ga(t)-\ga(a)| \ge
\de' - \eps$.
\end{proof}


\medskip

\noindent
\textbf{Proof of the key lemma when $0\notin\Om$.}
Let $\ga$ be as in the hypothesis of Lemma~\ref{lemkey}. Without loss of
generality, we can assume $\ga(1)\neq0$ (if not, view $\ga$ as the
restriction of a path $\ti\ga \col [0,2] \to \C\setminus\Om$ such that
$\ga(2)\neq0$, with which is associated a symmetric $\Om$-homotopy~$\ti H$
defined on $I\times [0,2]$, and restrict~$\ti H$ to $I\times [0,1]$).
Let $\de \defeq \dist\big( \Om, \ga([0,1]) \big)$. 

The set $Z \defeq \{\, t \in [0,1] \mid \ga(t) = 0 \,\}$ is closed; it is also
discrete because of the non-vanishing of the derivatives of~$\ga$, thus it has
a finite cardinality $N\in\N$.
If $N=0$, then we can apply Lemma~\ref{sublemkey} with $J=[0,1]$ and $h(s)
\equiv s\ga(0)$ and the proof is complete.

From now on we suppose $N\ge1$. Let us write
\[
Z = \{t_1, \ldots, t_N \} \quad
\text{with $0 < t_1 < \cdots < t_N < 1$.}
\]
We define
\[
\de_0 \defeq \demi \min\Big\{ \frac{\de}{2}, \rho-|\ga(0)| \Big\}
\quad \text{and} \quad
\eps \defeq \min\Big\{ |\ga(0)|, |\ga(1)|, \frac{\de_0}{N+1} \Big\}.
\]
The continuity of~$\ga$ allows us to find pairwise disjoint closed
intervals of positive lengths $K_1,\ldots,K_N$ such that
\[
t_j \in \mathring{K}_j \quad\text{and}\quad \ga(K_j) \subset \ov\D_{\eps/2},
\qquad j=1,\ldots,N.
\]
By considering the connected components of $[0,1]\setminus\bigcup K_j$ and
taking their closures, we get adjacent closed subintervals of positive lengths
of $[0,1]$,
\[
J_0, K_1, J_1, K_2, \ldots, J_{N-1}, K_N, J_N
\]
with $J_j = [a_j,b_j]$, $K_j = [b_{j-1},a_j]$, $a_0=0$, $b_N=1$.
Observe that
\[
0\notin\ga(J_j) \quad \text{and}\quad \ga(J_j) \subset \C\setminus \Om_\de,
\qquad j = 0,\ldots,N.
\]

\begin{itemize}
\item
We apply Lemma~\ref{sublemkey} with $J=J_0 = [0,b_0]$, $h(s)\equiv s\ga(0)$ and $\de' =
\de_0$ (which is allowed by the choice of~$\de_0$): 
we get a symmetric $\Om$-homotopy~$H$ defined on $I\times J_0$ whose initial
path is the line segment $[0,\ga(0)]$, whose endpoint path is
$\ga_{|J_0}$ and whose final path $H_{b_0}$ is $C^1$ and lies in $\C\setminus
\Om_{\de_0}$.
\item
We apply Lemma~\ref{sublemextra} with $K=K_1$, $\de'=\de_0$ and $h=H_{b_0}$:
we get an extension of our symmetric $\Om$-homotopy~$H$ to $I\times K_1$, in
which the enpoint path is extended by $\ga_{|K_1}$ and the final path is now
$H_{a_1}$, a $C^1$ path contained in $\C\setminus \Om_{\de_1}$
with $\de_1 \defeq \de_0 - \eps$.
\item
And so on: we apply alternatively Lemma~\ref{sublemextra} on~$K_j$ and
Lemma~\ref{sublemkey} on~$J_j$:
we get an extension of the symmetric $\Om$-homotopy~$H$ to $I\times K_j$ or
$I\times J_j$ such that
both $H_{a_j}(I)$ and $H_{b_j}(I)$ are contained in $\C\setminus \Om_{\de_j}$
with $\de_j \defeq \de_0 - j \eps$.
\end{itemize}

When we reach $j=N$, the proof of Lemma~\ref{lemkey} is complete.
\eopf




\noindent {\em Acknowledgements.}
The author wishes to thank the anonymous referee for helping to improve this article.
The research leading to these results has received funding from the
European Comunity's Seventh Framework Program (FP7/2007--2013) under Grant
Agreement n.~236346.




\vspace{1cm}

\noindent
David Sauzin\\[1ex]
CNRS UMI 3483 - Laboratorio Fibonacci \\
Collegio Puteano, Scuola Normale Superiore di Pisa \\
Piazza dei Cavalieri 3, 56126 Pisa, Italy\\
email:\,{\tt{david.sauzin@sns.it}}

\end{document}